\documentclass[11pt]{article}
\usepackage{amsmath,amsfonts,amsthm,amssymb, mathtools}
\usepackage{mathrsfs, graphicx,color,latexsym, tikz, calc,
}
\usepackage{tikz-cd}
\usepackage{graphicx}
\usepackage{epstopdf}
\usepackage{enumerate}
\usepackage{caption}
\usepackage{stmaryrd}
\usepackage{hyperref}

\usetikzlibrary{shadows}
\usetikzlibrary{patterns,arrows,decorations.pathreplacing}
\usepackage{ulem}

\numberwithin{equation}{section}
\newtheorem{theorem}{Theorem}[section]
\newtheorem{lemma}[theorem]{Lemma}

\newtheorem{conjecture}[theorem]{Conjecture}

\newtheorem{claim}{Claim}[section]

\allowdisplaybreaks[4]

\date{\today}
\title{\bf \Large Improved bound on  symmetric differences of intersecting families\footnote{ Lihua Feng was supported by the NSFC (Nos. 12271527 and 12471022) and NSF of Qinghai Province (No. 2025-ZJ-902T). Zejun Huang was supported by the NSFC
(No. 12171323) and the Guangdong Basic and Applied Basic Research Foundation (No.
2022A1515011995)
E-mail addresses: \url{fenglh@163.com} (L. Feng),\url{zejunhuang@szu.edu.cn} (L. Feng), \url{jv01065499zai@163.com} (Q. Wang), \url{wuyjmath@163.com} (Y. Wu). }
\author{
{\small Lihua Feng$^{1}$, Zejun Huang$^{2}$, Qifan Wang$^{1}$\footnote{Corresponding author},\ \ Yongjiang Wu$^{1}$\ \ 
}\\[2mm]
\small $^1$School of Mathematics and Statistics, HNP-LAMA, Central South University\\
 \small Changsha, Hunan, 410083, China\\ 
 \small $^2$School of Mathematical Sciences, Shenzhen University\\ \small Shenzhen, 518060, China\\
}}

\begin{document}
	\maketitle
	\begin{abstract}
For a family  $\mathcal{F}$, it is called intersecting if $F\cap F'\neq \emptyset$ for all $F,F'\in\mathcal{F}$.
 We use $\mathcal{SD}(\mathcal{F}) = \{F \triangle G : F, G \in \mathcal{F}\}$ to denote
  the family of symmetric differences of $\mathcal{F}$. 
 In 2023, Frankl, Kiselev and Kupavskii conjectured that for any intersecting family $\mathcal{F} \subseteq \binom{[n]}{k}$ with $n > 10k$, the  inequality $|\mathcal{SD}(\mathcal{F})| \le \sum_{\ell=0}^{k-1} \binom{n-1}{2\ell}$ holds. They further observed that a proof for the range $n>3k^2$ could likely be obtained via arguments similar to those in their earlier work, though no detailed derivation was given.  In this paper, we establish the conjecture under the conditions  $n\ge 100k\ln k$ and  $k\ge 50$.
 We also  determine the extremal families, which are precisely a certain class of  stars.  A concentration inequality plays a central role in the proof.
\end{abstract}

{\bf AMS Classification}:  05D05; 05C65
	
{\bf Key words}: Symmetric difference; Intersecting families; Concentration inequality

\section{Introduction}

Let $[n]=\{1,2,\dots,n\}$ and let $\binom{[n]}{k}$  be the family of all $k$-element subsets of $[n]$. 
A family $\mathcal{F}\subseteq 2^{[n]}$ is called \textit{intersecting} if $F\cap F'\neq\emptyset$ for all $F,F'\in\mathcal{F}$. 
One of the cornerstones of extremal set theory is the Erd\H{o}s--Ko--Rado theorem \cite{E61}, which states that for $n\ge 2k$, any intersecting family $\mathcal{F}\subseteq\binom{[n]}{k}$ satisfies 
$|\mathcal{F}|\le\binom{n-1}{k-1}$, with equality only for \textit{ full star} 
$\mathcal{S}_x=\{F\in\binom{[n]}{k}:x\in F\}$ when $n>2k$. Subfamilies of $\mathcal{S}_x$ are called \textit{stars}.

Beyond the size, a natural direction is to study other set operations induced by an intersecting family. 
For a family $\mathcal{F}$, define its family of \textit{differences} as 
$\mathcal{D}(\mathcal{F})=\{F\setminus F':F,F'\in\mathcal{F}\}$. 
Marica and Sch\"onheim \cite{M69} proved the lower bound $|\mathcal{D}(\mathcal{F})|\ge|\mathcal{F}|$. 
For intersecting families, Frankl \cite{F21} conjectured an upper bound: for $n>2k$ and 
$\mathcal{F}\subseteq\binom{[n]}{k}$ intersecting,
$$
|\mathcal{D}(\mathcal{F})|\le\sum_{\ell=0}^{k-1}\binom{n-1}{\ell},
$$
with equality only for full stars. He proved this for $n\ge k(k+3)$. 
Later, Frankl, Kiselev and Kupavskii \cite{F23} significantly improved the range to 
$n\ge 50k\ln k$ for $k\ge 50$, and showed that the bound fails when $n<4k$ via explicit counterexamples.
Analogous problems for intersections $F \cap F'$ have also been studied extensively; see e.g. \cite{FKK21, FW23}.

A closely related operation is the \textit{symmetric difference} 
$F\triangle G=(F\cup G)\setminus(F\cap G)$. For a family $\mathcal{F}$, let
$$
\mathcal{SD}(\mathcal{F})=\{F\triangle G:F,G\in\mathcal{F}\}.
$$
For a full star $\mathcal{S}_x$, a direct calculation gives
$$
|\mathcal{SD}(\mathcal{S}_x)|=\sum_{\ell=0}^{k-1}\binom{n-1}{2\ell}.
$$
In the same work \cite{F23}, Frankl, Kiselev and Kupavskii proposed the following conjecture, 
which is the symmetric-difference analogue of the set-difference problem.

\begin{conjecture}[Frankl, Kiselev, Kupavskii \cite{F23}]
Let $\mathcal{F}\subseteq\binom{[n]}{k}$ be an intersecting family with $n>10k$. Then
$$
|\mathcal{SD}(\mathcal{F})|\le\sum_{\ell=0}^{k-1}\binom{n-1}{2\ell}. 
$$
\end{conjecture}

They remarked that, using arguments analogous to those in \cite{F21}, one could likely prove the conjecture for $n>3k^2$, though no detailed proof was given. Moreover, they emphasized that even establishing the conjecture 
for $n>100k\ln k$ would already be very interesting.
In this paper, we prove the conjecture for $n\ge 100k\ln k$ and $k\ge 50$, 
and show that equality holds only for  a certain class of  stars.

\begin{theorem}\label{M1}
Let $\mathcal{F}\subseteq\binom{[n]}{k}$ be an intersecting family with  $n\ge 100 k\ln k$ and $k\ge 50$. Then
$$
|\mathcal{SD}(\mathcal{F})|\le\sum_{\ell=0}^{k-1}\binom{n-1}{2\ell}, 
$$
where equality holds if and only if $\mathcal{F}\subseteq \mathcal{S}_x$ and $|\mathcal{SD}(\mathcal{F})|=|\mathcal{SD}(\mathcal{S}_x)|$ for some $x\in [n]$.
\end{theorem}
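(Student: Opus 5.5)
The plan follows the strategy of Frankl, Kiselev and Kupavskii for differences, adapted to symmetric differences. First, every $F\triangle G$ with $F,G\in\mathcal F\subseteq\binom{[n]}{k}$ has even size $2\ell$ with $0\le \ell\le k-1$, because $F\cap G\neq\emptyset$. So we always have $|\mathcal{SD}(\mathcal F)|\le \sum_{\ell=0}^{k-1}\binom{n}{2\ell}$, and the task is to save roughly $\sum_\ell \binom{n-1}{2\ell-1}$.

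We split according to the maximum degree. Let $x$ be a vertex of maximum degree, let $\mathcal F(x)=\{F\in\mathcal F: x\in F\}$ and $\mathcal F(\bar x)=\mathcal F\setminus\mathcal F(x)$.

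\emph{Case 1: $\mathcal F(\bar x)=\emptyset$.} Then every symmetric difference avoids $x$, so $\mathcal{SD}(\mathcal F)\subseteq\bigcup_{\ell\le k-1}\binom{[n]\setminus\{x\}}{2\ell}$. This gives the bound, and equality forces $|\mathcal{SD}(\mathcal F)|=|\mathcal{SD}(\mathcal S_x)|$, which is the stated characterization.

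\emph{Case 2: $\mathcal F(\bar x)\neq\emptyset$.} We must show strict inequality. Symmetric differences containing $x$ arise only from pairs with $F\in\mathcal F(x)$ and $G\in\mathcal F(\bar x)$. Fix any $G_0\in\mathcal F(\bar x)$; every $F\in\mathcal F$ meets $G_0$.

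\emph{Sets avoiding $x$.} We bound the number of $2\ell$-sets in $\mathcal{SD}(\mathcal F)$ not containing $x$ by showing that a substantial fraction of $\binom{[n]\setminus\{x\}}{2\ell}$ is missed. This is where the concentration inequality enters. A set $D=F\triangle G$ with $|D|=2\ell$ and $F,G$ both meeting $G_0$ is constrained: $D\cap G_0$ must be realizable consistently with $F\cap G_0\ne\emptyset$ and $G\cap G_0\neq\emptyset$. More usefully, $F\cup G=D\sqcup(F\cap G)$ and every $F\in\mathcal F$ must intersect every other member. I would parametrize $D$ by splitting it into halves $A=F\setminus G$ and $B=G\setminus F$. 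Since $F$ and $G$ each intersect $G_0$, either $D$ meets $G_0$, or $F\cap G\cap G_0\ne\emptyset$. For most $\ell$, a uniformly random $2\ell$-set misses the $k$-set $G_0$ with probability about $(1-2\ell/n)^k$. By a hypergeometric/Chernoff bound, $|D\cap G_0|$ concentrates near $2\ell k/n$, which is small when $n\ge 100k\ln k$.

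So I would instead count pairs. Each $D\in\mathcal{SD}(\mathcal F)$ of size $2\ell$ needs a common part $C=F\cap G$ of size $k-\ell$, and in Case 2 there is extra structure. The cleaner route, mirroring \cite{F23}, is a cover-number argument:
\begin{itemize}
\item[(a)] If $\mathcal F(\bar x)$ is "small", bound $|\mathcal{SD}(\mathcal F)|$ by
\[
|\mathcal{SD}(\mathcal F(x))|+|\mathcal F(x)|\,|\mathcal F(\bar x)|+|\mathcal{SD}(\mathcal F(\bar x))| .
\]
Show that the loss in $\mathcal{SD}(\mathcal F(x))$, due to $\mathcal F(x)$ needing to meet $G_0$, exceeds the gain from the cross term.
\item[(b)] If $\mathcal F(\bar x)$ is large, use that $\mathcal F$ has small maximum degree (Frankl's degree theorem or Hilton--Milner type bounds) to show that $|\mathcal F|$ itself is at most about $k^2\binom{n-2}{k-2}$. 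Then bound $|\mathcal{SD}(\mathcal F)|$ by counting only $D$ of size $2\ell$ for $\ell$ in a window where $\binom{n}{2\ell}$ dominates. For $\ell$ near $k-1$, the top layer carries essentially all of the mass, since $\binom{n-1}{2k-2}$ is the largest term when $n\gg k$.
\end{itemize}

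\emph{Key step: the top layers.} The decisive estimate is therefore on layers $\ell=k-1,k-2,\dots$. A $(2k-2)$-set $D$ lies in $\mathcal{SD}(\mathcal F)$ iff $D=F\triangle G$ with $|F\cap G|=1$. So we count pairs $(F,G)$ meeting in exactly one point, together with the way $D$ splits into halves. Here the concentration inequality is used to show the following: for a random $(2k-2)$-set $D$ not containing $x$, the number of ways to split $D$ into halves $A,B$ with $A\cup\{y\},B\cup\{y\}\in\mathcal F$ is typically zero unless $y=x$. Failure of this needs $\mathcal F$ to contain many sets avoiding $x$. In that case the intersecting condition against $G_0$ restricts the splits, costing a factor like $(1-k/n)^{k}$-type probabilities. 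We then sum the losses over layers, using $n\ge100k\ln k$ so that $e^{-2\ell k/n}$-type error terms are at most $k^{-c}$, and $k\ge50$ so that the constants absorb.

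\emph{Main obstacle.} I expect the hardest part to be the unbalanced regime, where $\mathcal F(\bar x)$ is tiny (one or a few sets) but nonempty. Its cross term $|\mathcal F(x)|\,|\mathcal F(\bar x)|$ can produce many new sets containing $x$. We must prove this is strictly outweighed by the sets $D\not\ni x$ lost from $\mathcal{SD}(\mathcal F(x))$, because every $F\in\mathcal F(x)$ meets $G_0$. I would try a direct injection: map each new $D\ni x$ to a missed set $D'\not\ni x$ of the same size. Roughly, replace $x$ by an element of $[n]\setminus G_0$ and argue via concentration that this is injective on all but a negligible fraction.
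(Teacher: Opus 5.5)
Case~1 is fine. Case~2, however, is the whole content of the theorem, and it is not proved: you list several routes without carrying any out, and the concrete ones fail.

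\textbf{Route (b).} It cannot work. The top layer $\mathcal{SD}(\mathcal{F})^{(2k-2)}$ carries essentially all the mass, and a bound on $|\mathcal{F}|$ says nothing about which $(2k-2)$-sets are missed. Counting pairs $(F,G)$ with $|F\cap G|=1$ counts each $(2k-2)$-set up to about $\binom{2k-2}{k-1}$ times, as happens in a full star. So even with the Hilton--Milner bound $|\mathcal{F}|\le k\binom{n-2}{k-2}$, such a count does not beat the trivial bound $\binom{n}{2k-2}$ unless $n$ is exponential in $k$.

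\textbf{Route (a).} The cross term $|\mathcal{F}(x)|\,|\mathcal{F}(\bar x)|$ is far too crude once $\mathcal{F}(\bar x)$ is not tiny. The proposed injection also does not land in missed sets. For a top-layer $D=F\triangle G_0$ one has $D\cap G_0=G_0\setminus F$, of size $k-1$, and this is unchanged when $x$ is replaced by some $y\notin G_0$. Take $\mathcal{F}(x)$ to be all $k$-sets through $x$ meeting $G_0$. Then $D'$ splits into two $(k-1)$-sets $A,B$ each meeting $G_0$, so $D'=(A\cup\{x\})\triangle(B\cup\{x\})\in\mathcal{SD}(\mathcal{F})$.

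\textbf{The regime you call hardest.} It is actually the easy one, where a plain count suffices. No $(2k-2)$-subset of $[n]\setminus(G_0\cup\{x\})$ is a difference of two sets through $x$. There are about $\binom{n-k-1}{2k-2}$ such subsets, vastly more than the at most $|\mathcal{F}(\bar x)|\,|\mathcal{F}|$ differences involving $\mathcal{F}(\bar x)$. The real difficulty is a family far from a star, and there your ``key step'' (random top-layer sets split only through $y=x$) is simply asserted.

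\textbf{The missing idea.} Replace the max-degree split by the shade $\mathcal{F}'=\nabla^{2k-1}\mathcal{F}$ and its diversity $\gamma(\mathcal{F}')$. The shade is intersecting, and every $S\in\mathcal{SD}(\mathcal{F})^{(2k-2)}$ satisfies $S\cup\{y\}\in\mathcal{F}'$ for some $y$. The paper then proceeds in two steps.
\begin{enumerate}
\item \emph{Large $|\mathcal{SD}(\mathcal{F})|$ forces small $\gamma(\mathcal{F}')$.} If $|\mathcal{SD}(\mathcal{F})|\ge\sum_{\ell}\binom{n-1}{2\ell}$, the top layer has density at least $1-3/C$. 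The concentration inequality of Theorem~\ref{concen}, applied with a uniformly random $(2k-1)$-set rather than a hypergeometric bound for a fixed $G_0$, yields $F'\in\mathcal{F}'$ such that a constant fraction of $\binom{[n]\setminus F'}{2k-2}$ lies in the top layer. These sets fall into pairwise cross-intersecting families $\mathcal{E}_y$, $y\in F'$. Kruskal--Katona forces one of them, $\mathcal{E}_1$, to exceed $\binom{m}{2k-2}-\binom{m-r}{2k-2}$, where $m=n-2k+1$. Cross-intersection of $\mathcal{E}_1$ with each $\mathcal{F}'(B,F')$ then gives $|\mathcal{F}'(\overline{1})|\le\binom{n-r-1}{n-2k}$.
\item \emph{Small positive $\gamma(\mathcal{F}')$ forces strict inequality.} The loss is $|\Delta^{2k-2}\mathcal{G}|$, where $\mathcal{G}$ is the family of complements in $[2,n]$ of members of $\mathcal{F}(\overline{1})$: a $(2k-2)$-set disjoint from some member of $\mathcal{F}(\overline{1})$ cannot be a difference of two sets through $1$. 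The gains are the top-layer differences inside $\mathcal{F}(\overline{1})$ and the sets through $1$. The latter are bounded in layer $2k-2j$ by $\binom{2k-j-1}{j}|\nabla^{2k-j-1}\mathcal{F}(\overline{1})|$, not by a product of sizes. Lov\'asz's form of Kruskal--Katona shows all gains are a small fraction of the loss, precisely because $\gamma(\mathcal{F}')$ is small.
\end{enumerate}
Without a substitute for these two steps, Case~2 remains unproved.
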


 Our proof is inspired by the elegant method of Frankl, Kiselev and Kupavskii \cite{F23} for set differences, but its adaptation to the symmetric difference setting is nontrivial and requires new ideas, particularly in the diversity analysis and the application of the concentration inequality. 
The proof proceeds in two stages. We first establish the conjecture for the range $n \geq 60k^{3/2}$ using the diversity method of   \cite{F23}, adapted to the symmetric difference setting. This already improves upon the previously suggested quadratic range $n > 3k^2$. To extend the result to the logarithmic range $n \geq 100k \ln k$, we refine the analysis of the non-star case: a sharper estimation of the error terms yields an exponential gain that suffices as long as $\frac{n}{k}$ is large enough relative to  $\ln k$. The concentration step, which is the main technical ingredient, requires only this logarithmic condition.

This paper is organized as follows. Section \ref{se2} collects the necessary notation and tools. In Section \ref{se3}, we  prove Theorem \ref{M1} for the range  $n\ge 60k^\frac{3}{2}$. In Section \ref{se4}, we handle the remaining  range $100 k\ln k \le n< 60k^\frac{3}{2}$, thereby completing the proof of Theorem \ref{M1}.

\section{Notation and tools} \label{se2}

For a family $\mathcal{G} \subseteq 2^{[n]}$ and an integer $\ell$, we write $\mathcal{G}^{(\ell)} = \{G \in \mathcal{G} : |G| = \ell\}$. 
For example,
$
\mathcal{SD}(\mathcal{F})^{(i)}= \{ D \in \mathcal{SD}(\mathcal{F}) : |D| = i \}.
$
For $A\subseteq B\subseteq [n]$, we use the standard notation $$\mathcal{G}(A,B)=\{G\setminus A:G\in \mathcal{G},\: G\cap B=A\}.$$
In the case $A=B$ or $A=\emptyset$, we use the shorter form $\mathcal{G}(B):=\mathcal{G}(B,B)$ and $\mathcal{G}(\overline{B}):=\mathcal{G}(\emptyset,B)$.
If $B=\{x\}$, then we set $\mathcal{G}(x):=\mathcal{G}(\{x\})$ and $\mathcal{G}(\overline{x}):=\mathcal{G}(\overline{\{x\}})$.

For a family $\mathcal{F} \subseteq \binom{[n]}{k}$ and  integers $p \le k \leq q$, we define the \textit{$p$-th level shadow} 
$$
\Delta^p \mathcal{F} = \left\{ P \in \binom{[n]}{p} : \exists F \in \mathcal{F}, \ P \subseteq F \right\},
$$
and the \textit{$q$-th level shade} 
$$\nabla^q\mathcal{F}=\{Q\in\binom{[n]}{q} :\exists F \in \mathcal{F}, \ F\subseteq Q\}.$$

The classical Kruskal--Katona theorem \cite{Kruskal1963, Katona1964} gives a lower bound on the size of the shadow. We will use the following convenient formulation due to Lov\'asz \cite{L79}.

\begin{theorem}\cite{L79}\label{L79}
Let $\mathcal{F} \subseteq \binom{[n]}{k}$ and write $|\mathcal{F}| = \binom{x}{k}$ for some real number $x$ with $k \le x \le n$. Then for any integer $p$ with $0 \le p \le k$,
$$
|\Delta^p \mathcal{F}| \ge \binom{x}{p}.
$$
\end{theorem}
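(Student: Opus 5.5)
We prove the statement for $p=k-1$ and then iterate. Suppose the $p=k-1$ case holds, and put $\mathcal{G}=\Delta^{k-1}\mathcal{F}$ with $|\mathcal{G}|=\binom{y}{k-1}$. Then $\binom{y}{k-1}\ge\binom{x}{k-1}$. Since $t\mapsto\binom{t}{k-1}$ is increasing for $t\ge k-1$, this gives $y\ge x$. Because $\Delta^{p}\mathcal{F}=\Delta^{p}\mathcal{G}$, repeating the step $k-p$ times gives $|\Delta^p\mathcal{F}|\ge\binom{x}{p}$. The cases $p=k$ and $p=0$ are trivial.

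Next we reduce to shifted families. For $i<j$, the shift $S_{ij}$ preserves $|\mathcal{F}|$, and the standard check gives $\Delta^{k-1}S_{ij}(\mathcal{F})\subseteq S_{ij}(\Delta^{k-1}\mathcal{F})$. Hence shifting never increases the shadow, and we may assume $\mathcal{F}$ is shifted ($S_{ij}(\mathcal{F})=\mathcal{F}$ for all $i<j$). We then induct on $k$, with base case $k=1$ trivial since $\Delta^0\mathcal{F}=\{\emptyset\}$ whenever $\mathcal{F}\neq\emptyset$. Within a fixed $k$ we induct on $|\mathcal{F}|$ or $n$ as needed. Split $\mathcal{F}$ into $\mathcal{F}(1)\subseteq\binom{[2,n]}{k-1}$ and $\mathcal{F}(\overline{1})\subseteq\binom{[2,n]}{k}$. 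The key consequence of shiftedness is
$$\Delta^{k-1}\mathcal{F}(\overline{1})\subseteq\mathcal{F}(1).$$
Indeed, if $G\in\mathcal{F}(\overline 1)$ and $y\in G$, then shifting $y$ to $1$ gives $(G\setminus\{y\})\cup\{1\}\in\mathcal{F}$. Moreover, $\Delta^{k-1}\mathcal{F}$ contains the disjoint families $\mathcal{F}(1)$ (sets avoiding $1$) and $\{\{1\}\cup P:P\in\Delta^{k-2}\mathcal{F}(1)\}$ (sets containing $1$). So
$$|\Delta^{k-1}\mathcal{F}|\ge|\mathcal{F}(1)|+|\Delta^{k-2}\mathcal{F}(1)|.$$

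The main step is to show $|\mathcal{F}(1)|\ge\binom{x-1}{k-1}$. Suppose instead $|\mathcal{F}(1)|<\binom{x-1}{k-1}$. Then
$$|\mathcal{F}(\overline 1)|=\binom{x}{k}-|\mathcal{F}(1)|>\binom{x}{k}-\binom{x-1}{k-1}=\binom{x-1}{k}.$$
The induction hypothesis, applied to the smaller family $\mathcal{F}(\overline1)$ of $k$-sets, gives $|\Delta^{k-1}\mathcal{F}(\overline 1)|\ge\binom{x-1}{k-1}>|\mathcal{F}(1)|$. This contradicts the containment above. With the claim in hand, the induction hypothesis for $k-1$ applied to $\mathcal{F}(1)$ gives $|\Delta^{k-2}\mathcal{F}(1)|\ge\binom{x-1}{k-2}$. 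Summing, $|\Delta^{k-1}\mathcal{F}|\ge\binom{x-1}{k-1}+\binom{x-1}{k-2}=\binom{x}{k-1}$ by Pascal's rule for real binomials.

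I expect the main obstacle to be the boundary cases where $x-1<k$. There the real binomial $\binom{x-1}{k}$ lies in $[0,1)$, so the induction hypothesis cannot be invoked in the form "$|\mathcal{F}(\overline1)|=\binom{z}{k}$ with $z\ge k$". I would handle this directly. If $\mathcal{F}(\overline 1)\neq\emptyset$, it already has shadow of size at least $k$. For $k-1\le x-1<k$ we have $\binom{x-1}{k-1}\le k$, with equality only at $x=k+1$, where $\binom{x-1}{k}=1$ so the strict inequality forces $|\mathcal{F}(\overline1)|\ge 2$ and hence a larger shadow. If $\mathcal{F}(\overline1)=\emptyset$, then $|\mathcal{F}(1)|=|\mathcal{F}|=\binom{x}{k}\ge\binom{x-1}{k-1}$ directly. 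For $\mathcal{F}(1)$ we also need $\binom{x-1}{k-1}\le|\mathcal{F}(1)|$ to correspond to a parameter $\ge k-1$ so that the $(k-1)$-case applies. This holds since $x\ge k$, using monotonicity of the real binomial in that range.
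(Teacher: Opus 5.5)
Your proof is correct. Note that the paper does not prove this statement: it quotes it from Lov\'asz and uses it as a black box, so there is no in-paper argument to compare against.

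What you give is the standard shifting proof of the Lov\'asz form of Kruskal--Katona, essentially Frankl's short proof. The steps are:
\begin{enumerate}
\item reduce to a single level;
\item shift $\mathcal{F}$;
\item use $\Delta^{k-1}\mathcal{F}(\overline{1})\subseteq\mathcal{F}(1)$ to force $|\mathcal{F}(1)|\ge\binom{x-1}{k-1}$ by contradiction;
\item finish with $|\Delta^{k-1}\mathcal{F}|\ge|\mathcal{F}(1)|+|\Delta^{k-2}\mathcal{F}(1)|$ and Pascal's rule for real binomials.
\end{enumerate}
Each step checks out.

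There are three small points to tidy up.

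\textbf{The boundary case is not a real obstacle.} Apply the induction hypothesis to $\mathcal{F}(\overline{1})$ with its own parameter, $|\mathcal{F}(\overline{1})|=\binom{z}{k}$. Here $z\ge k$, because $|\mathcal{F}(\overline1)|>\binom{x-1}{k}\ge 0$ forces $\mathcal{F}(\overline1)\neq\emptyset$. The map $t\mapsto\binom{t}{k}$ is strictly increasing on $[k-1,\infty)$ and $x-1\ge k-1$, so $z>x-1$. Hence $|\Delta^{k-1}\mathcal{F}(\overline1)|\ge\binom{z}{k-1}\ge\binom{x-1}{k-1}$ in every case, with no case split needed. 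Your aside about equality at $x=k+1$ lies outside the range $x-1<k$; there one actually has $\binom{x-1}{k-1}<k$ strictly. The aside is superfluous but harmless.

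\textbf{State the inner induction explicitly.} Induction on $n$ works directly, since $\mathcal{F}(\overline1)\subseteq\binom{[2,n]}{k}$. Induction on $|\mathcal{F}|$ also works: a nonempty shifted family contains $[k]$, so $\mathcal{F}(1)\neq\emptyset$ and $|\mathcal{F}(\overline1)|<|\mathcal{F}|$.

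\textbf{Mention that shifting terminates.} The total sum of elements over all members strictly decreases with each effective shift.
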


Two families $\mathcal{F} \subseteq \binom{[n]}{k}$ and $\mathcal{G} \subseteq \binom{[n]}{\ell}$ are called \textit{cross intersecting} if $F \cap G \neq \emptyset$ for all $F \in \mathcal{F}, G \in \mathcal{G}$.  Katona \cite{Katona1966} observed that for $n \ge k + \ell$, $\mathcal{F}$ and $\mathcal{G}$ are cross-intersecting if and only if $\mathcal{F} \cap \Delta^k(\mathcal{G}^c) = \emptyset$, where $\mathcal{G}^c = \{[n] \setminus G : G \in \mathcal{G}\}$. Combining this observation with Theorem \ref{L79} yields the following bound.

\begin{theorem}\label{cros}
		For $n\ge k+\ell$, if $\mathcal{F}\subseteq\binom{[n]}{k}$ and $\mathcal{G}\subseteq \binom{[n]}{\ell}$ are cross-intersecting and $|\mathcal{F}|\ge \binom{x}{n-k}$ for some $n-k\leq x\leq n$, then $|\mathcal{G}|\le \binom{n}{\ell}-\binom{x}{\ell}$.
	\end{theorem}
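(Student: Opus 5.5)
The statement to prove is Theorem \ref{cros}. I would derive it from Katona's observation combined with Theorem \ref{L79}, applied to the family of complements of $\mathcal{F}$ rather than of $\mathcal{G}$, since the hypothesis is phrased through $\binom{x}{n-k}$.

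\emph{Step 1 (complements).} Set $\mathcal{F}^c=\{[n]\setminus F: F\in\mathcal{F}\}\subseteq\binom{[n]}{n-k}$, so $|\mathcal{F}^c|=|\mathcal{F}|\ge\binom{x}{n-k}$.

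\emph{Step 2 (disjointness from the shadow).} Since $n\ge k+\ell$, we have $\ell\le n-k$, so the shadow $\Delta^{\ell}\mathcal{F}^c$ is defined. I claim $\mathcal{G}\cap\Delta^{\ell}\mathcal{F}^c=\emptyset$. If $G\in\mathcal{G}$ satisfied $G\subseteq [n]\setminus F$ for some $F\in\mathcal{F}$, then $G\cap F=\emptyset$, contradicting cross-intersection. Hence
\[
\mathcal{G}\subseteq\binom{[n]}{\ell}\setminus\Delta^{\ell}\mathcal{F}^c .
\]

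\emph{Step 3 (Lov\'asz bound).} I want $|\Delta^{\ell}\mathcal{F}^c|\ge\binom{x}{\ell}$. Write $|\mathcal{F}^c|=\binom{y}{n-k}$ with $n-k\le y\le n$. Because $\binom{t}{n-k}$ is increasing in $t$ for $t\ge n-k$ and $|\mathcal{F}^c|\ge\binom{x}{n-k}$, we get $y\ge x$. Theorem \ref{L79} then gives
\[
|\Delta^{\ell}\mathcal{F}^c|\ge\binom{y}{\ell}\ge\binom{x}{\ell},
\]
using that $\binom{t}{\ell}$ is nondecreasing in $t$ for $t\ge n-k\ge\ell$, hence for $t\ge \ell-1$. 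A cleaner alternative is to delete sets of $\mathcal{F}^c$ until its size is exactly $\binom{x}{n-k}$. This is not always possible, since the size must be an integer. So the monotonicity route is the one I would use.

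\emph{Step 4 (conclusion).} Combining Steps 2 and 3, $|\mathcal{G}|\le\binom{n}{\ell}-\binom{x}{\ell}$.

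The only delicate point is the real-valued monotonicity in Step 3. It is routine: $\binom{t}{m}=t(t-1)\cdots(t-m+1)/m!$, and for $t\ge m-1$ every factor is nonnegative and increasing in $t$. Everything else follows directly.
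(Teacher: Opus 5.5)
Your proof is correct and is essentially the paper's argument: Katona's complement observation combined with Lov\'asz's form of Kruskal--Katona. You use the symmetric form $\mathcal{G}\cap\Delta^{\ell}(\mathcal{F}^c)=\emptyset$, which is the version actually needed since the hypothesis is on $|\mathcal{F}|$, and your explicit monotonicity step from $y\ge x$ to $\binom{y}{\ell}\ge\binom{x}{\ell}$ fills in a detail the paper leaves implicit.
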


Recall that $\mathcal{G}(\overline{H}) = \{ G \in \mathcal{G} : G \cap H = \emptyset \}$. 
The following concentration inequality is crucial for our proof. 

\begin{theorem}\cite{F23}\label{concen}
		Fix integers $m, \ell, \ell', t$ such that $m \geq t\ell + \ell'$, fix $a > 0$ and set $\varepsilon = \dfrac{2a+\sqrt{8\ln 2}}{\sqrt{t}}$. Let $\mathcal{G} \subseteq \binom{[m]}{\ell}$ be a family and set $\alpha := \dfrac{|\mathcal{G}|}{\binom{m}{\ell}}$. Let $H$ be distributed uniformly on $\binom{[m]}{\ell'}$. Then
		$$
		\mathbb P \left[ \mathcal{G}(\overline{H}) | < (\alpha - \varepsilon) \binom{m - \ell'}{\ell} \right] < 2e^{-a^2/2}. 
		$$
	\end{theorem}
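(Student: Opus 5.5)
The plan is a double-counting/sampling argument in which the random set $H$ is extended to a random configuration of $t$ pairwise disjoint $\ell$-sets. Since $m\ge t\ell+\ell'$, we can take a uniformly random permutation $\sigma$ of $[m]$ and set
$$H=\sigma(\{1,\dots,\ell'\}),\qquad A_i=\sigma(\{\ell'+(i-1)\ell+1,\dots,\ell'+i\ell\})\quad (1\le i\le t).$$
Then $H$ is uniform on $\binom{[m]}{\ell'}$. Conditioned on $H$, the tuple $(A_1,\dots,A_t)$ is a uniformly random ordered collection of pairwise disjoint $\ell$-sets in $[m]\setminus H$. Put
$$X(H)=\frac{|\mathcal{G}(\overline{H})|}{\binom{m-\ell'}{\ell}},\qquad Y=\bigl|\{i: A_i\in\mathcal{G}\}\bigr|.$$
Each $A_i$ is marginally uniform on $\binom{[m]}{\ell}$, so $\mathbb{E}[Y]=t\alpha$. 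Conditioned on $H$, each $A_i$ is uniform on $\binom{[m]\setminus H}{\ell}$, so $\mathbb{E}[Y\mid H]=tX(H)$.

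\emph{Unconditional lower-tail bound.} $Y$ is a function of the random permutation $\sigma$, and a transposition of two values of $\sigma$ changes at most two of the $A_i$, hence changes $Y$ by at most $2$. More usefully, $Y$ is a sum of $t$ indicators of blocks of a random ordering, and exposing the blocks $A_1,\dots,A_t$ one at a time gives a Doob martingale with bounded increments. Azuma--Hoeffding (or McDiarmid's inequality for permutations) then yields a bound of the form
$$\mathbb{P}\bigl[Y\le t\alpha-s\bigr]\le \exp\!\left(-\frac{c\,s^2}{t}\right)$$
for an absolute constant $c$. The same argument applied conditionally on $H$ gives the upper-tail bound
$$\mathbb{P}\bigl[Y\ge tX(H)+s\mid H\bigr]\le \exp\!\left(-\frac{c\,s^2}{t}\right).$$

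\emph{Combining the two tails.} Suppose, for contradiction, that $\mathbb{P}[X(H)<\alpha-\varepsilon]\ge 2e^{-a^2/2}$. Choose $s_1=\sqrt{2\ln 2\, t/c}$, so that the conditional upper tail at deviation $s_1$ is at most $1/2$. For each $H$ with $X(H)<\alpha-\varepsilon$ we then have
$$\mathbb{P}\bigl[Y< t(\alpha-\varepsilon)+s_1\mid H\bigr]\ge \tfrac12 .$$
Averaging over $H$ gives
$$\mathbb{P}\bigl[Y<t\alpha-(t\varepsilon-s_1)\bigr]\ge e^{-a^2/2}.$$
The unconditional bound, on the other hand, gives
$$\mathbb{P}\bigl[Y<t\alpha-(t\varepsilon-s_1)\bigr]\le \exp\!\left(-\frac{c\,(t\varepsilon-s_1)^2}{t}\right).$$
With $\varepsilon=(2a+\sqrt{8\ln 2})/\sqrt t$ we have $t\varepsilon=2a\sqrt t+\sqrt{8\ln 2\,t}$, so $t\varepsilon-s_1\ge 2a\sqrt t$ provided the constants match. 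The right-hand side is then at most $e^{-4ca^2}$, which is strictly less than $e^{-a^2/2}$ once $c\ge 1/8$. This is the desired contradiction.

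\emph{Main obstacle.} The delicate step is obtaining the martingale inequality with constants matching the stated $\varepsilon$. The natural route is Hoeffding's bound $\exp(-2s^2/t)$ for $[0,1]$-valued summands: exposing $A_1,\dots,A_t$ sequentially, each indicator has conditional range $1$. Increments of the Doob martingale of a sum of dependent indicators can be larger, though, so I would first try Hoeffding's comparison of sampling without replacement with sampling with replacement. Applying it to the $\ell$-sets viewed as a random ordered disjoint packing should give $c=2$ with room to spare. If that fails, the fallback is McDiarmid's permutation inequality with increments $2$, which gives $c=1/8$ and still closes the argument.
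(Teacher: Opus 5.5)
The paper gives no proof of this statement; it is quoted from Frankl--Kiselev--Kupavskii. So your argument has to stand on its own, and it has a genuine gap.

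Your skeleton is sound: extend $H$ by a uniform $t$-matching $A_1,\dots,A_t$ in $[m]\setminus H$, use $\mathbb{E}[Y]=t\alpha$ and $\mathbb{E}[Y\mid H]=tX(H)$, and play a conditional upper tail at level $\tfrac12$ against an unconditional lower tail. The constants in $\varepsilon$ are calibrated to exactly this skeleton with a one-sided bound $\exp(-s^2/(8t))$: $s=\sqrt{8\ln 2\,t}$ gives conditional tail $\tfrac12$, and $s=2a\sqrt t$ gives $e^{-a^2/2}$.

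The gap is that the whole content of the theorem is that $\sqrt t$-scale tail bound for the number $Y$ of blocks of a random $t$-matching (in a ground set of size $\ge t\ell$) that lie in a given family. None of your three routes proves it.

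(i) The block-exposure Doob martingale does not have $O(1)$ increments. Take $m=t\ell+1$, a fixed $\ell$-set $X$, $\mathcal{G}$ the $\ell$-sets meeting $X$, and $\ell\ge 2t\ln(4t)$. Then $\mathbb{P}[A_1\cap X=\emptyset]\le(1-\ell/m)^{\ell}\le e^{-\ell/(2t)}\le\frac{1}{4t}$, so $\mathbb{E}[Y]\ge t-\frac14$. But on the event $A_1=X$ no other block meets $X$, so $Y=1$, and the first increment has size at least $t-\frac54$.

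(ii) Hoeffding's with/without-replacement comparison concerns a simple random sample from a fixed population. Blocks of a matching are forced to be disjoint, not merely distinct, and $c=2$ is in fact false. For $t=2$ and $m=2\ell$ one has $A_2=[m]\setminus A_1$. Take $\mathcal{G}$ closed under complementation with $\alpha=\tfrac12$ (e.g.\ $\ell=3$ and $5$ of the $10$ complementary pairs). Then $Y\in\{0,2\}$ and $\mathbb{P}[Y\le t\alpha-1]=\tfrac12>e^{-1}$.

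(iii) McDiarmid's inequality for permutations with transposition-Lipschitz constant $2$ concentrates at scale $\sqrt m$, or at best $\sqrt{t\ell}$. That gives $c$ of order $1/\ell$, not an absolute constant, and in this paper $\ell=2k-2$ is large.

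So you need a genuine concentration theorem for random matchings with variance proxy $O(t)$. Either cite one, or prove it by a method that copes with rare large jumps like those in (i).

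Even granting $c=\tfrac18$, your bookkeeping does not close as you claim. Your choice $s_1=\sqrt{2\ln 2\,t/c}$ makes the conditional tail $\tfrac14$, not $\tfrac12$. At $c=\tfrac18$ it equals $\sqrt{16\ln 2\,t}>\sqrt{8\ln 2\,t}$, so $t\varepsilon-s_1=\bigl(2a-(4-2\sqrt2)\sqrt{\ln 2}\bigr)\sqrt t<2a\sqrt t$, and the final comparison fails. Take $s_1=\sqrt{\ln 2\,t/c}$ instead, which makes the tail exactly $\tfrac12$. Then $t\varepsilon-s_1=2a\sqrt t$ at $c=\tfrac18$, and both sides of your final comparison equal $e^{-a^2/2}$. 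The strict inequality therefore has to come from strictness of the Markov step in the exponential-moment bound, not from $c\ge\tfrac18$. That strictness does hold, since $e^{-\lambda Y}>0$ and $Y$ is not almost surely equal to $t\alpha-s$.
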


	\section{The range $n\ge 60k^\frac{3}{2}$}\label{se3}
 
	\subsection{Reduction to two lemmas}

For a family $\mathcal{F} \subseteq \binom{[n]}{k}$, its \textit{diversity} is defined as
$$
\gamma(\mathcal{F}) = \min_{x \in [n]} |\{F \in \mathcal{F} : x \notin F\}|.
$$
Clearly $\gamma(\mathcal{F}) = 0$ if and only if $\mathcal{F}$ is a star. The concept of diversity was introduced by Lemons and Palmer \cite{L08} as a measure of how far an intersecting family deviates from a star. It has since then become a central tool in the study of large intersecting families. For any intersecting family $\mathcal{F} \subseteq \binom{[n]}{k}$,   Frankl and Wang \cite{F24} proved  that $\gamma(\mathcal{F}) \le \binom{n-3}{k-2}$ holds for $n > 36k$; this bound is currently the best known.  

 In our proof, we will not work directly with $\gamma(\mathcal{F})$ but with the diversity of its shade. Let $\mathcal{F}\subseteq\binom{[n]}{k}$ and $\mathcal{F}' = \nabla^{2k-1}\mathcal{F}$ be the $(2k-1)$-th level shade of $\mathcal{F}$. For $n \geq 2k$, one can check that $\gamma(\mathcal{F}) = 0$ if and only if $\gamma(\mathcal{F}') = 0$; and if $\gamma(\mathcal{F}) \ge 1$, then
$
\gamma(\mathcal{F}') \ge \binom{n-k-1}{k-1}.
$

The following two lemmas are the core of the proof of Theorem \ref{M1} for  $n\ge 60k^\frac{3}{2}$.

	\begin{lemma}\label{lem1}
		Let $\mathcal{F}\subseteq \binom{[n]}{k}$ be intersecting. Suppose that $n\ge 60k^\frac{3}{2}$, $k\ge 50$ and $|\mathcal{SD}(\mathcal{F})|\geq \sum\limits_{\ell=0}^{k-1}\binom{n-1}{2\ell}$. Then$$\gamma(\mathcal{F}')\le \binom{n-\sqrt{k}-1}{n-2k}.$$
		
	\end{lemma}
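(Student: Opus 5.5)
The plan is to compare $\mathcal{SD}(\mathcal{F})$ level by level with $\mathcal{SD}(\mathcal{S}_x)$. We then show that a large diversity of the shade $\mathcal{F}'=\nabla^{2k-1}\mathcal{F}$ forces many $(2k-2)$-sets to be missing from $\mathcal{SD}(\mathcal{F})$. That loss must outweigh everything gained on the lower levels.

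\emph{Setup.} Since $\mathcal{F}$ is intersecting, every $D=F\triangle G$ with $F,G\in\mathcal{F}$ has even size at most $2k-2$. Moreover $|D|=2k-2$ exactly when $F\cap G=\{x\}$ for a single point $x$. In that case $D\cup\{x\}=F\cup G$ is a $(2k-1)$-set containing $F$, so $D\cup\{x\}\in\mathcal{F}'$ with $x\notin D$. Hence
$$\mathcal{SD}(\mathcal{F})^{(2k-2)}\subseteq\mathcal{T}:=\{S\setminus\{x\}: S\in\mathcal{F}',\ x\in S\}.$$
On the lower levels I use the trivial bound
$$\sum_{\ell\le k-2}|\mathcal{SD}(\mathcal{F})^{(2\ell)}|\le\sum_{\ell\le k-2}\binom{n}{2\ell}.$$
Subtracting this from the hypothesis $|\mathcal{SD}(\mathcal{F})|\ge\sum_{\ell=0}^{k-1}\binom{n-1}{2\ell}$ gives
$$|\mathcal{SD}(\mathcal{F})^{(2k-2)}|\ \ge\ \binom{n-1}{2k-2}-\sum_{\ell\le k-2}\binom{n-1}{2\ell-1}.$$
For $n\ge 60k^{3/2}$ the error sum is at most about $2\binom{n-1}{2k-5}$, which is smaller than $\binom{n-1}{2k-2}$ by a factor of order $(2k/n)^3$. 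So the top level must be almost full.

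\emph{Structural step.} Suppose, for contradiction, that $\gamma(\mathcal{F}')>\binom{n-\sqrt{k}-1}{n-2k}$. For every $y$, the family $\mathcal{F}'(\overline{y})$ then has more than $\binom{n-\sqrt{k}-1}{n-2k}$ members, with $n-2k$ equal to the complement size $n-(2k-1)-1$ inside $[n]\setminus\{y\}$. I want a family $\mathcal{M}_y$ of $(2k-2)$-sets through $y$, or complements thereof, that is cross-intersecting with $\mathcal{F}'(\overline{y})$ and consists of sets lying outside $\mathcal{SD}(\mathcal{F})$. The natural candidate comes from intersecting pairs: if $F,G\in\mathcal{F}$ meet only in $y$, then every $H\in\mathcal{F}$ avoiding $y$ must meet both $F\setminus\{y\}$ and $G\setminus\{y\}$. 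Translating this through complements gives a cross-intersecting pair to which Theorem~\ref{cros} applies, with $x=n-\sqrt{k}-1$. The conclusion should be that, for each $y$, the number of $D\in\mathcal{SD}(\mathcal{F})^{(2k-2)}$ realised through $y$ is at most $\binom{n-1}{2k-2}-\binom{n-\sqrt{k}-1}{2k-2}$, in a suitably normalised form.

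\emph{Counting step.} I then count
$$|\mathcal{SD}(\mathcal{F})^{(2k-2)}|\le\binom{n}{2k-2}-\#\{\text{sets missing for every }y\}$$
by double counting pairs $(D,y)$ with $y\notin D$ and comparing with the star. The deficit obtained is of order
$$\binom{n-1}{2k-2}-\binom{n-\sqrt{k}-1}{2k-2}\approx\binom{n-1}{2k-2}\Bigl(1-e^{-2k\sqrt{k}/n}\Bigr)\gtrsim\frac{k^{3/2}}{n}\binom{n-1}{2k-2}.$$
Using $n\ge 60k^{3/2}$ and $k\ge 50$, this exceeds the allowed slack of order $\frac{k^3}{n^3}\binom{n-1}{2k-2}$, which is the desired contradiction.

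\emph{Main obstacle.} The delicate step is the structural one. I need to identify precisely which $(2k-2)$-sets are forced out of $\mathcal{SD}(\mathcal{F})$ by a large $\mathcal{F}'(\overline{y})$, and arrange them as a genuinely cross-intersecting pair at levels whose sizes sum to at most $n$, so that Theorem~\ref{cros} is applicable. A further difficulty is that the points $x$ realising a given $D$ vary with $D$. My first attempt would be to fix $y$ achieving $\gamma(\mathcal{F}')$ and work inside $[n]\setminus\{y\}$, where $\mathcal{F}'(\overline{y})\subseteq\binom{[n]\setminus\{y\}}{2k-1}$. I would then use Katona's observation on complements to pass to $\Delta^{2k-2}$, invoke Theorem~\ref{L79} to lower-bound the shadow, and check that every $D$ in that shadow which avoids $y$ yet lies in $\mathcal{SD}(\mathcal{F})$ would contradict the intersecting property.
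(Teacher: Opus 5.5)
Your per-point structural bound is correct, and it is easier than you fear. Let $\mathcal{D}_y$ be the set of $D\in\mathcal{SD}(\mathcal{F})^{(2k-2)}$ with $y\notin D$ and $D\cup\{y\}\in\mathcal{F}'$. Then $\mathcal{D}_y$ and $\mathcal{F}'(\overline{y})$ are cross-intersecting inside $[n]\setminus\{y\}$, because $\mathcal{F}'$ is intersecting and members of $\mathcal{F}'(\overline{y})$ miss $y$. So Theorem~\ref{cros} gives $|\mathcal{D}_y|\le\binom{n-1}{2k-2}-\binom{n-\sqrt{k}-1}{2k-2}$ under your contradiction hypothesis.

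\textbf{The gap is the counting step.} This bound limits how many sets are realised \emph{through} $y$; it gives no lower bound on sets missing for \emph{every} $y$. A given $D$ may be realised through a single point, so the only link to $\mathcal{SD}(\mathcal{F})^{(2k-2)}$ is
$|\mathcal{SD}(\mathcal{F})^{(2k-2)}|\le\sum_y|\mathcal{D}_y|\le n\bigl(\binom{n-1}{2k-2}-\binom{n-\sqrt{k}-1}{2k-2}\bigr)$.
The right-hand side is of order $k^{3/2}\binom{n-1}{2k-2}$, far above $\binom{n}{2k-2}$. As far as counting goes, the $\mathcal{D}_y$ could jointly cover all of $\binom{[n]}{2k-2}$, so there is no contradiction. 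Equivalently, averaging over $y$ only produces a point with $|\mathcal{D}_y|\gtrsim\frac1n\binom{n-1}{2k-2}$, whereas you would need about $\frac{2k^{3/2}}{n}\binom{n-1}{2k-2}$. The global deficit of order $\frac{k^{3/2}}{n}\binom{n-1}{2k-2}$ that you assert is never derived. You are conflating the per-$y$ upper bound on realised sets with a deficit.

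\textbf{What is missing is a way to localise.}
\begin{enumerate}
\item The paper uses Theorem~\ref{concen}. By (\ref{fo3}), a uniformly random $(2k-1)$-set lies in $\mathcal{F}'$ with probability at least $\frac{1}{2n}$. Since $\mathcal{SD}(\mathcal{F})^{(2k-2)}$ has density at least $1-\frac3C$, such a set is atypical with probability less than $\frac{1}{4n}$. Hence some $F'\in\mathcal{F}'$ has at least $\frac1{10}\binom{m}{\ell}$ members of $\mathcal{SD}(\mathcal{F})^{(2k-2)}$ disjoint from it, where $m=n-2k+1$ and $\ell=2k-2$.
\item Such sets can only be realised through points of $F'$. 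So only $2k-1$ classes $\mathcal{E}_x$, $x\in F'$, occur, and they are pairwise cross-intersecting in $\binom{[n]\setminus F'}{\ell}$.
\item Plain averaging over these $2k-1$ classes is still too weak. It gives about $\frac{1}{20k}\binom{m}{\ell}$, against a threshold $\binom{m}{\ell}-\binom{m-\sqrt{k}}{\ell}$ that can be a constant fraction (up to roughly $\frac1{30}$) of $\binom{m}{\ell}$.
\item The decisive point is that two cross-intersecting subfamilies of $\binom{[m]}{\ell}$ cannot both exceed $\binom{m-1}{\ell-1}$. Hence one class $\mathcal{E}_1$ carries almost all the weight: $|\mathcal{E}_1|>\frac1{12}\binom{m}{\ell}>\binom{m}{\ell}-\binom{m-\sqrt{k}}{\ell}$.
\item The finish is also local. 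Theorem~\ref{cros} is applied to $\mathcal{E}_1$ against each trace family $\mathcal{F}'(A,F')$ with $1\notin A\subseteq F'$, and these bounds are summed by Vandermonde to $\binom{n-\sqrt{k}-1}{n-2k}$.
\end{enumerate}

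Your global version on $[n]\setminus\{y\}$ would not substitute for that last step. The ratio $\binom{m}{\ell}/\binom{n-1}{\ell}\approx e^{-4k^2/n}$ is exponentially small in $\sqrt{k}$ near $n=60k^{3/2}$. So $|\mathcal{E}_1|$ alone need not reach $\binom{n-1}{2k-2}-\binom{n-\sqrt{k}-1}{2k-2}$.
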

	\begin{lemma}\label{lem2}
		Let $\mathcal{F}\subseteq \binom{[n]}{k}$ be intersecting.  Suppose that $n\ge 60k^\frac{3}{2}$, $k\ge 50$ and $1\le \gamma(\mathcal{F}')\le \binom{n-\sqrt{k}}{n-2k}$. Then $$|\mathcal{SD}(\mathcal{F})|< \sum\limits_{\ell=0}^{k-1}\binom{n-1}{2\ell}.$$
	\end{lemma}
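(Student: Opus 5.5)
Fix $x\in[n]$ attaining $\gamma(\mathcal{F}')$. Since $\gamma(\mathcal{F}')\ge 1$, the family $\mathcal{F}$ is not a star, so $\mathcal{B}:=\mathcal{F}(\overline{x})\neq\emptyset$. Write $\mathcal{A}:=\mathcal{F}(x)\subseteq\binom{[n]\setminus\{x\}}{k-1}$. I would split $\mathcal{SD}(\mathcal{F})$ into three parts:
\begin{itemize}
\item $\mathcal{SD}_1=\{A\triangle A':A,A'\in\mathcal{A}\}$, the symmetric differences of two members of $\mathcal{F}(x)$;
\item $\mathcal{SD}_2=\{(A\cup\{x\})\triangle B: A\in\mathcal{A},\ B\in\mathcal{B}\}$, all of which contain $x$;
\item $\mathcal{SD}_3=\{B\triangle B':B,B'\in\mathcal{B}\}$, which avoid $x$ and have even size at most $2k$.
\end{itemize}
The sets in $\mathcal{SD}_1$ avoid $x$ and have even size at most $2k-2$. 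Hence $|\mathcal{SD}_1|\le\sum_{\ell=0}^{k-1}\binom{n-1}{2\ell}$ trivially. The plan is to show that the \emph{deficit} of $\mathcal{SD}_1$ from this bound strictly exceeds $|\mathcal{SD}_2|+|\mathcal{SD}_3^{(2k)}|$. The remaining sets of $\mathcal{SD}_3$, of size at most $2k-2$, can be charged to the same even levels and controlled in the same way.

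\textbf{Deficit.} Use only the top level $2k-2$. Fix some $B\in\mathcal{B}$. Every $A\in\mathcal{A}$ meets $B$ because $\mathcal{F}$ is intersecting. A set $D=A\triangle A'$ of size $2k-2$ is the disjoint union $A\sqcup A'$, and both parts meet $B$, so $|D\cap B|\ge 2$. Therefore every $D\in\binom{[n]\setminus\{x\}}{2k-2}$ with $|D\cap B|\le 1$ is missing from $\mathcal{SD}_1$. This gives
$$\sum_{\ell=0}^{k-1}\binom{n-1}{2\ell}-|\mathcal{SD}_1|\ \ge\ \binom{n-k-1}{2k-2}+k\binom{n-k-1}{2k-3}\ \ge\ \binom{n-k-1}{2k-2}.$$
For $n\ge 60k^{3/2}$ this is of order $n^{2k-2}/(2k-2)!$, up to a factor $e^{-O(k^2/n)}$ that is harmless.

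\textbf{Gain.} Here the hypothesis $\gamma(\mathcal{F}')\le\binom{n-\sqrt{k}}{n-2k}$ enters. Every $(2k-1)$-subset of $[n]\setminus\{x\}$ containing some $B\in\mathcal{B}$ lies in $\mathcal{F}'$ and avoids $x$. Hence $\nabla^{2k-1}\mathcal{B}$, taken inside $[n]\setminus\{x\}$, has size at most $\binom{n-\sqrt{k}}{n-2k}$. Passing to complements in $[n]\setminus\{x\}$ turns shades into shadows. Applying Theorem \ref{L79} to the complement family then bounds $|\mathcal{B}|$: writing $|\mathcal{B}^c|=\binom{y}{n-1-k}$, the shadow at level $n-2k$ has size at least $\binom{y}{n-2k}$, which forces $y\le n-\sqrt{k}$ and hence
$$|\mathcal{B}|\le\binom{n-\sqrt{k}}{n-1-k}=\binom{n-\sqrt{k}}{k+1-\sqrt{k}}.$$
This is smaller than $\binom{n-1}{k}$ by a factor of roughly $(k/n)^{\sqrt{k}-1}$.

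Each set in $\mathcal{SD}_2$ is determined by a pair $(B,A)$. I would bound the number of such pairs using the cross-intersection of $\mathcal{A}$ with $\mathcal{B}$ via Theorem \ref{cros}, which gives $|\mathcal{A}|\le\binom{n-1}{k-1}$ minus a correction. This yields
$$|\mathcal{SD}_2|\le|\mathcal{B}|\binom{n-1}{k-1}\quad\text{and}\quad|\mathcal{SD}_3|\le\binom{|\mathcal{B}|+1}{2}.$$
Both bounds are polynomials of degree about $2k-\sqrt{k}$ in $n$. Against the deficit of degree $2k-2$ this looks too weak when $\sqrt{k}$ is small. So $\mathcal{SD}_2$ needs to be counted by level instead. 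A set $D=(A\cup\{x\})\triangle B$ with $|A\cap B|=j\ge 1$ is determined by $B$ together with $D\setminus B$, a $(k-1-j)$-set, and the $j$-set $B\setminus D$. This gives
$$|\mathcal{SD}_2|\le|\mathcal{B}|\sum_{j\ge1}\binom{k}{j}\binom{n-1}{k-1-j},$$
whose top term is $k\binom{n-1}{k-2}|\mathcal{B}|$. Combined with the bound on $|\mathcal{B}|$, this is at most $\binom{n-k-1}{2k-2}$ times a factor of order $k^{O(\sqrt{k})}n^{-\sqrt{k}+1}\cdot(\text{binomial ratios})$. For $n\ge 60k^{3/2}$ that factor is $o(1)$.

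\textbf{Main obstacle.} The hardest part is the bookkeeping that makes this last comparison go through uniformly. The ratio $\binom{n-\sqrt{k}}{k+1-\sqrt{k}}\binom{n-1}{k-2}k\big/\binom{n-k-1}{2k-2}$ contains a factor like $\binom{2k-2}{k-1}$ coming from the central binomial. That factor must be beaten by $(n/k)^{\sqrt{k}}$, and this is exactly where $n\ge 60k^{3/2}$ is needed: $(n/k)^{\sqrt{k}}\ge(60\sqrt{k})^{\sqrt{k}}$ must dominate $4^{k}$. I expect this to hold with room only once $k\ge 50$.

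I would first try to sharpen the deficit by also using all the missing lower even levels. If that is insufficient, I would replace the crude factor $\binom{n-1}{k-1}$ by the Theorem \ref{cros} bound on $\mathcal{A}$, which gains the needed $\binom{n-k-1}{k-1}$-type saving.
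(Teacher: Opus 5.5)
You have a genuine gap, and it sits in the gain estimate, not in the deficit. Your deficit step is correct, but the numerical claim you rely on to close the argument is false. The level-wise count $|\mathcal{SD}_2|\le|\mathcal{B}|\sum_{j\ge1}\binom{k}{j}\binom{n-1}{k-1-j}$ is still a count over pairs $(A,B)$. Its top term $k\binom{n-1}{k-2}\binom{n-\sqrt{k}}{k+1-\sqrt{k}}$ differs from $\binom{n-1}{2k-2}$ by a factor of order $k\binom{2k-2}{k-1}(k/n)^{\sqrt{k}-1}$, up to $e^{O(1)+O(k^2/n)}$. At $n=60k^{3/2}$ this factor is roughly $4^{k}(60\sqrt{k})^{-(\sqrt{k}-1)}$. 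But $\sqrt{k}\ln(60\sqrt{k})<k\ln 4$ for every $k\ge 16$; at $k=50$ the two sides are about $43$ and $69$, and the gap widens as $k$ grows. So $(60\sqrt{k})^{\sqrt{k}}$ does not dominate $4^{k}$, and a pair count of this type only becomes effective once $n$ is of order $k\,4^{\sqrt{k}}$.

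Near $n=60k^{3/2}$ your bound for $|\mathcal{SD}_2|$ therefore exceeds the whole right-hand side $\sum_{\ell}\binom{n-1}{2\ell}\le(1+O(k^2/n^2))\binom{n-1}{2k-2}$. No sharpening of the deficit can help: the lower even levels add at most $O(k^2/n^2)\binom{n-1}{2k-2}$. The Theorem~\ref{cros} bound on $|\mathcal{A}|$ does not appear in your level-wise count at all. In the crude bound $|\mathcal{A}||\mathcal{B}|$ it saves only a polynomial factor of order $k^{3/2}/n$. The same objection applies to $|\mathcal{SD}_3|\le\binom{|\mathcal{B}|+1}{2}$. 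Note also that $\mathcal{SD}_3^{(2k)}=\emptyset$, since $\mathcal{F}$ is intersecting. The part of $\mathcal{SD}_3$ that competes with your deficit is $\mathcal{SD}_3^{(2k-2)}$; its lower levels are absorbed by the trivial bound.

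The missing idea is to count by shades rather than by pairs, as the paper does in (\ref{fo6}). Take $D=(A\cup\{x\})\triangle B$ with $|A\cap B|=j$. Then $D\setminus\{x\}\subseteq Q:=A\cup B\in\nabla^{2k-1-j}\mathcal{B}$, and $D$ is recovered from $Q$ and the $j$-set $Q\setminus D=A\cap B$. Hence $|\mathcal{SD}_2^{(2k-2j)}|\le\binom{2k-1-j}{j}|\nabla^{2k-1-j}\mathcal{B}|$, and likewise $|\mathcal{SD}_3^{(2k-2)}|\le(2k-1)|\mathcal{F}'(\overline{x})|$. Now pass to complements in $[n]\setminus\{x\}$ and apply Theorem~\ref{L79} exactly as you did for $|\mathcal{B}|$. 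The $(n-2k)$-shadow of the complements of $\nabla^{2k-1-j}\mathcal{B}$ is the family of complements of $\mathcal{F}'(\overline{x})$, so $|\nabla^{2k-1-j}\mathcal{B}|\le\binom{n-\sqrt{k}}{n-2k+j}$.

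The $j$-th term is then at most about $\binom{2k}{j}(2k/n)^{j+\sqrt{k}-2}e^{O(k^2/n)}\binom{n-k-1}{2k-2}$. All error terms together are $O(k)\,(2k/n)^{\sqrt{k}-2}e^{O(\sqrt{k})}\binom{n-k-1}{2k-2}$, which is negligible for $n\ge 60k^{3/2}$ and $k\ge 50$. With this repair even your single-$B$ deficit suffices for Lemma~\ref{lem2}.

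The paper, with your $x$ relabelled $1$, instead measures the deficit by $|\Delta^{2k-2}\mathcal{G}|\ge\binom{z}{2k-2}$, where $\gamma(\mathcal{F}')=\binom{z}{n-2k}$. This makes the deficit grow together with $\mathcal{F}'(\overline{1})$, and the shade bounds are compared to it through Claims~\ref{cl4} and~\ref{cl6}. The paper relies on the same normalization in Section~\ref{se4}, where $n/k$ is only logarithmic.
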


For  $n\ge 60k^\frac{3}{2}$,  Theorem \ref{M1} follows directly from the two lemmas above. 
 If $\gamma(\mathcal{F}) = 0$, then $\mathcal{F}$ is a subfamily of a full star and Theorem \ref{M1} holds.  If $\gamma(\mathcal{F}) \ge 1$, then $\gamma(\mathcal{F}') \ge \binom{n-k-1}{k-1} > 0$. 
Suppose, for contradiction, that  $|\mathcal{SD}(\mathcal{F})|\geq \sum\limits_{\ell=0}^{k-1}\binom{n-1}{2\ell}$. Then Lemma \ref{lem1} implies $\gamma(\mathcal{F}') \leq \binom{n-\sqrt{k}-1}{n-2k}\leq \binom{n-\sqrt{k}}{n-2k}$, contradicting Lemma \ref{lem2}. Thus, Theorem \ref{M1} holds in the range $n\ge 60k^\frac{3}{2}$.

	\subsection{Proof of Lemma \ref{lem1}}

	Let $\mathcal{F}\subseteq \binom{[n]}{k}$ be intersecting and $|\mathcal{SD}(\mathcal{F})|\geq \sum\limits_{\ell=0}^{k-1}\binom{n-1}{2\ell}.$ Denote $C:=\frac{n}{k}$, $\mathcal{F}' = \nabla^{2k-1} \mathcal{F}$. \\

    {\bf We first bound $ \left| \mathcal{SD}(\mathcal{F})^{(2k-2)} \right|$ from below}. 
    \\ 
    
    For $1 \le i \le 2k-5$, we have
	$$\frac{\binom{n-1}{i-1}}{\binom{n-1}{i}}=\frac{i}{n-i}\le\frac{2k}{n - 2k} < \frac{1}{2}.$$
	It follows that 
    \begin{align*}
    \sum_{i=1}^{k-2} \binom{n-1}{2i-1} <& 2\binom{n-1}{2k-5} \\
= & \frac{2(2k-2)(2k-3)(2k-4)}{(n-2k+2)(n-2k+3)(n-2k+4)} \binom{n-1}{2k-2}\\
    < &\frac{16}{(C-2)^3} \binom{n-1}{2k-2}.
    \end{align*}
Since $|\mathcal{SD}(\mathcal{F})|\geq \sum\limits_{l=0}^{k-1}\binom{n-1}{2l}$, we obtain
	\begin{align}
    \left| \mathcal{SD}(\mathcal{F})^{(2k-2)} \right|\geq &\sum_{i=0}^{k-1} \binom{n-1}{2i} - \sum_{i=0}^{k-2} \binom{n}{2i} \notag\\
    =& \binom{n-1}{2k-2}- \sum_{i=1}^{k-2} \binom{n-1}{2i-1} \notag\\
    \geq &\left( 1 - \frac{16}{(C-2)^3} \right) \binom{n-1}{2k-2}.\label{fo1}
	\end{align}

{\bf Next we relate $\mathcal{SD}(\mathcal{F})^{(2k-2)}$ to $\mathcal{F}'$}.  \\ 

Every symmetric difference of size $2k-2$ is contained in some member of $\mathcal{F}'$. Moreover, each member of $\mathcal{F}'$ can contain at most $2k-1$ such symmetric differences. Therefore,
\begin{align}
|\mathcal{SD}(\mathcal{F})^{(2k-2)}| \le (2k-1)|\mathcal{F}'| \le 2k|\mathcal{F}'|.\label{fo2}
\end{align}
For $C\ge6$, we derive from (\ref{fo1}) and  (\ref{fo2}) that
 \begin{align}|\mathcal{F}'| \geq\frac{3}{8k} \binom{n-1}{2k-2} = \frac{3(2k-1)}{8nk} \binom{n}{2k-1} \geq \frac{1}{2n} \binom{n}{2k-1}.\label{fo3}
 \end{align}
By (\ref{fo1}), the density of $\mathcal{SD}(\mathcal{F})^{(2k-2)}$ satisfies
\begin{align*}
 \frac{|\mathcal{SD}(\mathcal{F})^{(2k-2)}|}{\binom{n}{2k-2}} &\geq \left(1 - \frac{16}{(C-2)^3}\right) \frac{n-2k+2}{n} \\
 &\geq \left(1 - \frac{16}{(C-2)^3}\right) \left(1 - \frac{2}{C}\right)\\
 & \geq 1 - \frac{3}{C}.
\end{align*}

{\bf We now apply the concentration inequality}.\\

	Apply Theorem \ref{concen} to $\mathcal{SD}(\mathcal{F})^{(2k-2)}$ with $m=n$, $\ell=2k-2$, $\ell'=2k-1$, $t=\lfloor \frac{C}{2} \rfloor-1$, $a=\sqrt{2\ln 8n}$, $\varepsilon=\frac{2a+\sqrt{8\ln 2}}{\sqrt{t}}$, $\alpha=\frac{|\mathcal{SD}(\mathcal{F})^{(2k-2)}|}{\binom{n}{2k-2}}\ge 1-\frac{3}{C}$.
A direct computation shows
\begin{align*}
 \varepsilon &\le \frac{2\sqrt{2\ln 8Ck} + 2.4}{\sqrt{\frac{C}{2}-2}} 
 < \frac{2\sqrt{2\ln(480k^\frac{3}{2})} + 2.4}{\sqrt{30\sqrt{k}-2} }\\
  &< 2\frac{\sqrt{2\ln(480)+3\ln{k}}}{\sqrt{29\sqrt{k}}} + 0.17 
  < 0.86 < 0.9 - \frac{3}{C}.
\end{align*}
	Then $\alpha-\varepsilon>\frac{1}{10}$.
    Let $H$ be distributed uniformly on $\binom{[n]}{2k-1}$. 
By (\ref{fo3}), we have $\mathbb P\left[H\in \mathcal{F}'\right]\ge \frac{1}{2n}$.
Moreover, Theorem \ref{concen} gives
   $$\mathbb P\left[\left|\mathcal{SD}(\mathcal{F})^{(2k-2)}\cap \binom{\overline{H}}{2k-2}\right|< \frac{1}{10}\binom{n-2k+1}{2k-2}\right]< 2e^{-\frac{\mathstrut{a^2}}{2}}=\frac{1}{4n},$$
   where $\overline{H}=[n]\backslash H$.
	Therefore,  there exists a set $F'\in \mathcal{F}'$ such that $\left|\mathcal{SD}(\mathcal{F})^{(2k-2)}\cap\binom{\overline{F'}}{2k-2}\right|\ge\frac{1}{10}\binom{n-2k+1}{2k-2}$. Without loss of generality,  assume $F'=[2k-1]$.\\

{\bf Finally, we analyze the structure of symmetric differences disjoint from $F'$}.\\

	For each $S\in \mathcal{SD}(\mathcal{F})^{(2k-2)}\cap\binom{\overline{F'}}{2k-2}$, there exists an element  $x\in [n]\backslash S$ such that $S\cup \{x\}\in \mathcal{F}'$. Since $\mathcal{F}'$ is intersecting, we have  $\left( S\cup \{x\} \right)\cap F'\neq \emptyset$. Thus, $x\in F'$.  Split the family $\mathcal{SD}(\mathcal{F})^{(2k-2)} \cap \binom{\overline{F'}}{2k-2}$ into $\bigcup_{x \in F'} \mathcal{E}_x$ accordingly, where $S \in \mathcal{E}_x$ implies $\{x\} \cup S \in \mathcal{F}'$. The families $\mathcal{E}_x$, $x \in [2k-1]$ are pairwise cross intersecting.
	Let $m=n-2k+1$, $\ell=2k-2$.

	\begin{claim}\label{cl1}
		For some $i\in [2k-1]$, we have $|\mathcal{E}_i| > \binom{m}{\ell} - \binom{m-\sqrt{k}}{\ell}$.
	\end{claim}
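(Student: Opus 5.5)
The plan is to argue by contradiction. Suppose every $\mathcal{E}_i$, $i\in[2k-1]$, satisfies $|\mathcal{E}_i|\le \binom{m}{\ell}-\binom{m-\sqrt{k}}{\ell}$. All $\mathcal{E}_x$ live in $\binom{\overline{F'}}{\ell}$, a ground set of size $m=n-2k+1$, and $m\ge 2\ell$ because $n\ge 60k^{3/2}$. The starting point is
$$\Big|\bigcup_{x\in F'}\mathcal{E}_x\Big|=\Big|\mathcal{SD}(\mathcal{F})^{(2k-2)}\cap\tbinom{\overline{F'}}{2k-2}\Big|\ge \tfrac1{10}\tbinom{m}{\ell}.$$
The key observation is that I should bound the union of all families but one, not the sum of their sizes. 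If $i_0$ indexes the largest family, then $\mathcal{U}:=\bigcup_{x\ne i_0}\mathcal{E}_x$ is itself cross-intersecting with $\mathcal{E}_{i_0}$, since each $\mathcal{E}_x$ is. Hence
$$\tfrac1{10}\tbinom{m}{\ell}\le |\mathcal{E}_{i_0}|+|\mathcal{U}|,$$
and it suffices to show that both terms are small.

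For the first term, I use the contradiction hypothesis together with $\binom{m-\sqrt k}{\ell}/\binom{m}{\ell}\ge (1-\frac{\ell}{m-\sqrt k})^{\sqrt k}\ge 1-\frac{\sqrt k\,\ell}{m-\sqrt k}$. Since $\ell<2k$ and $m-\sqrt k\ge n-3k\ge 59k^{3/2}$, this gives
$$|\mathcal{E}_{i_0}|\le \frac{2k^{3/2}}{59k^{3/2}}\binom{m}{\ell}<\frac1{29}\binom{m}{\ell}.$$

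For the second term, I use the largeness of $\mathcal{E}_{i_0}$. There are $2k-1$ families, so $|\mathcal{E}_{i_0}|\ge \frac{1}{10(2k-1)}\binom{m}{\ell}\ge\frac1{20k}\binom{m}{m-\ell}$. I then choose real $x=m-y$ with $\binom{x}{m-\ell}=\frac{1}{20k}\binom{m}{m-\ell}\le|\mathcal{E}_{i_0}|$. Applying Theorem~\ref{cros} on the ground set $\overline{F'}$, with $\mathcal{E}_{i_0}$ in the role of $\mathcal{F}$ (uniformity $\ell$, and $m\ge 2\ell$) and $\mathcal{U}$ in the role of $\mathcal{G}$, gives
$$|\mathcal{U}|\le\binom{m}{\ell}-\binom{m-y}{\ell}.$$
To estimate $y$, note that $\binom{m-y}{m-\ell}/\binom{m}{m-\ell}$ behaves like $\prod_{j<y}\frac{\ell-j}{m-j}\le (\ell/m)^{y}$ for integer $y$, and this extends to real $y$ through the Lovász form. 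Since $m/\ell\ge 29\sqrt k$, we get $y\le \frac{\ln(20k)}{\ln(29\sqrt k)}<2$ for $k\ge 50$. Consequently
$$|\mathcal{U}|\le \Big(1-(1-\tfrac{\ell}{m-2})^{2}\Big)\binom m\ell\le \frac{2\ell}{m-2}\binom m\ell<\frac{1}{14\sqrt k}\binom m\ell<\frac1{90}\binom m\ell.$$
Combining, $|\mathcal{E}_{i_0}|+|\mathcal{U}|<(\frac1{29}+\frac1{90})\binom m\ell<\frac1{10}\binom m\ell$. This contradicts the displayed lower bound, so some $\mathcal{E}_i$ must exceed $\binom{m}{\ell}-\binom{m-\sqrt k}{\ell}$.

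The step I expect to need the most care is the estimate of $y$ from the real-valued binomial equation $\binom{m-y}{m-\ell}=\frac1{20k}\binom{m}{m-\ell}$. First I will check that the hypothesis $m-\ell\le x\le m$ of Theorem~\ref{cros} holds, which is immediate since $y<2\le\ell$. I will then handle non-integer $y$ by monotonicity: because $\binom{z}{m-\ell}$ is increasing in $z$, it suffices to show that $\binom{m-2}{m-\ell}\le\frac1{20k}\binom m{m-\ell}$. This reduces to $\frac{\ell(\ell-1)}{m(m-1)}\le\frac{1}{20k}$, which holds comfortably since $\ell^2/m^2\le 1/(841k)$. Everything else is routine. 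If the constants turned out to be tighter than expected, the fallback is to replace $\frac1{20k}$ by the sharper $\frac{1}{10(2k-1)}$, since there is ample slack.
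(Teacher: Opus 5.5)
Your proof is correct, but it takes a different route from the paper's.

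\textbf{The paper's route.} The paper first proves a structural statement: some $\mathcal{E}_i$ satisfies $|\mathcal{E}_i|\ge\sum_j|\mathcal{E}_j|-\binom{m-1}{\ell-1}$. If not, the $2k-1$ families can be grouped into two classes, each of total weight exceeding $\binom{m-1}{\ell-1}$ (either a heavy singleton versus the rest, or the first initial segment that crosses the threshold). But two cross-intersecting subfamilies of $\binom{\overline{F'}}{\ell}$ cannot both exceed $\binom{m-1}{\ell-1}$. So Theorem \ref{cros} is used only at the threshold $x=m-1$. This gives $|\mathcal{E}_i|>(\frac1{10}-\frac{\ell}{m})\binom m\ell>\frac1{12}\binom m\ell$, which is then compared with $\binom m\ell-\binom{m-\sqrt k}{\ell}<\frac1{12}\binom m\ell$.

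\textbf{Your route.} You take the largest family and pay the pigeonhole factor $2k-1$, getting density at least $\frac1{20k}$. You recover that loss by applying Theorem \ref{cros} at $x=m-2$, which is legitimate because $(\ell/m)^2\le\frac1{841k}<\frac1{20k}$. The union $\mathcal{U}$ of the other families then has density $O(1/\sqrt k)$. Together with the contradiction hypothesis (density below $\frac1{29}$), the total falls below $\frac1{10}$.

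\textbf{What each buys.} Your argument is a single application of Theorem \ref{cros} with no partition step. However, it depends on the polynomial gap $m/\ell\ge 29\sqrt k$ coming from $n\ge 60k^{3/2}$. The paper's argument only needs $\ell/m$ below a small absolute constant and does not depend on how many families there are. That is why it is reused verbatim in Section \ref{se4} for $n\ge 100k\ln k$. There $m/\ell$ can be of order $\ln k$, so $x=m-2$ is no longer available. You would need $x=m-y$ with $y$ of order $\ln k/\ln\ln k$, which still seems to work but requires more bookkeeping.

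\textbf{Two small technical points.} First, $\sqrt k$ need not be an integer, so the bound $\binom{m-\sqrt k}{\ell}/\binom m\ell\ge(1-\frac{\ell}{m-\sqrt k})^{\sqrt k}$ needs justification. It is cleaner to write the ratio as $\prod_{i=0}^{\ell-1}\bigl(1-\frac{\sqrt k}{m-i}\bigr)\ge 1-\frac{\sqrt k\,\ell}{m-\ell+1}$, which gives the same constant $\frac{2}{59}$. Second, you do not need the real-valued equation defining $y$. Theorem \ref{cros} only requires $|\mathcal{E}_{i_0}|\ge\binom{x}{m-\ell}$ for some admissible $x$, so you can substitute $x=m-2$ directly once you check $\binom{m-2}{m-\ell}\le\frac1{20k}\binom{m}{m-\ell}$.
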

	\begin{proof}
For each $i\in [2k-1]$, consider the inequality
$|\mathcal{E}_i| < |\mathcal{SD}(\mathcal{F'})^{(2k-2)} \cap \binom{\overline{F'}}{2k-2}| - \binom{m-1}{\ell-1}$.
If this inequality holds for all  $i\in [2k-1]$,
	 then  for every $i$ we have $\sum_{j\neq i}|\mathcal{E}_j|> \binom{m-1}{\ell-1}$.
     We claim that  one can partition $[2k-1]$ into two disjoint nonempty subsets $I_1$ and $ I_2$ such that
		 \begin{align}
		\Bigl| \bigcup_{i \in I_1} \mathcal{E}_i \Bigr| > \binom{m-1}{\ell-1}
		\quad\text{and}\quad
		\Bigl| \bigcup_{i \in I_2} \mathcal{E}_i \Bigr| > \binom{m-1}{\ell-1}.\label{fo4}
\end{align}
Indeed, if there exists an index $j$ such that $|\mathcal{E}_j|>\binom{m-1}{\ell-1}$, we may simply take $I_1=\{j\}$ and $I_2=[2k-1]\backslash \{j\}$.
Otherwise, assume $|\mathcal{E}_i|\leq\binom{m-1}{\ell-1}$ for all $i$.
Since $ \sum_{j\neq i}|\mathcal{E}_j|> \binom{m-1}{\ell-1}$ for every $i$, we can choose the smallest integer $t\geq 2$  such that  $ \sum_{j=1}^{t} |\mathcal{E}_j|> \binom{m-1}{\ell-1}$.
Then $ \sum_{j=1}^{t-1} |\mathcal{E}_j|\leq \binom{m-1}{\ell-1}$.
Consequently,   $\sum_{j=t+1}^{2k-1} |\mathcal{E}_j|\geq  \sum_{j=1}^{2k-1} |\mathcal{E}_j|-2\binom{m-1}{\ell-1}$.
Using the lower bound $\sum_{j=1}^{2k-1} |\mathcal{E}_j|=\left|\mathcal{SD}(\mathcal{F})^{(2k-2)}\cap\binom{\overline{F'}}{2k-2}\right|\ge\frac{1}{10}\binom{n-2k+1}{2k-2}$, we obtain
$$\sum_{j=t+1}^{2k-1} |\mathcal{E}_j|\ge\frac{1}{10}\binom{m}{\ell}-2\binom{m-1}{\ell-1}>\binom{m-1}{\ell-1}.$$
Thus take  $I_1=[t]$ and $I_2=[t+1, 2k-1]$ satisfies (\ref{fo4}).
Now observe that  $\bigcup_{i \in I_1} \mathcal{E}_i$ and   $\bigcup_{i \in I_2} \mathcal{E}_i$ are cross intersecting. Condition (\ref{fo4}) contradicts   Theorem \ref{cros}.
		
		Thus, there exists some $i$ such that $|\mathcal{E}_i| \geq |\mathcal{SD}(\mathcal{F})^{(2k-2)} \cap \binom{\overline{F'}}{k-1}| - \binom{m-1}{\ell-1}$. 
		It suffices to verify that the right-hand side is at least $\binom{m}{\ell} - \binom{m-\sqrt{k}}{\ell}+1$. First, note that
		$\frac{\ell}{m}=\frac{2k-2}{n-2k+1}\le\frac{2}{C-2}<\frac{1}{60}$.
         It follows that
		$$
		|\mathcal{E}_i| \geq \biggl( \frac{1}{10} - \frac{\ell}{m} \biggr) \binom{m}{\ell} > \frac{1}{12} \binom{m}{\ell}.
		$$
		It remains to show that $\binom{m-\sqrt{k}}{\ell} > \frac{11}{12} \binom{m}{\ell}$. Observe that
		\begin{align*}
			m - \ell - \sqrt{k} + 1 &= n-2k+1-2k+2-\sqrt{k}+1\\
&>60k\sqrt{k}-5k>50k\sqrt{k}>20\ell \sqrt{k}. 
		\end{align*}
		Therefore, we have
		\begin{align*}
			\frac{\binom{m}{\ell}}{\binom{m-\sqrt{k}}{\ell}}&=\prod_{i=1}^{l} \frac{m-i+1}{m-\sqrt{k}-i+1}
\le \left(\frac{m-\ell+1}{m-\sqrt{k}-\ell+1}\right)^\ell
= \left(1+\frac{\sqrt{k}}{m - \sqrt{k} - \ell + 1}\right)^\ell \\
&\le \exp\biggl( \frac{\ell \sqrt{k}}{m - \sqrt{k} - \ell + 1} \biggr)
\le e^{\frac{1}{20}}<\frac{12}{11}.
		\end{align*}
		This completes the proof of the claim.
	\end{proof}

Without loss of generality, we assume that the index $i$ guaranteed by Claim \ref{cl1} is $1$.

\begin{claim}\label{cl2}
Let $A \subseteq F'$ be such that $1 \notin A$. Then
\[
|\mathcal{F}'(A, F')| \le \binom{m - \sqrt{k}}{m - 2k + 1 + |A|}.
\]
\end{claim}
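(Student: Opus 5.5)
We deduce Claim \ref{cl2} from Claim \ref{cl1} via the cross-intersecting bound, Theorem \ref{cros}. Fix $A\subseteq F'$ with $1\notin A$. Put $X=[n]\setminus F'$, so $|X|=m=n-2k+1$. Each member of $\mathcal{F}'(A,F')$ is a subset of $X$ of size $2k-1-|A|$. Each member of $\mathcal{E}_1$ is an $\ell$-subset of $X$ with $\ell=2k-2$.

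\textbf{Step 1: cross-intersection.} We show that $\mathcal{F}'(A,F')$ and $\mathcal{E}_1$ are cross-intersecting families on the ground set $X$. Take $G\in\mathcal{F}'(A,F')$, so $G\cup A\in\mathcal{F}'$ and $(G\cup A)\cap F'=A$. Take $S\in\mathcal{E}_1$, so $S\cup\{1\}\in\mathcal{F}'$. The family $\mathcal{F}'$ is intersecting: two $(2k-1)$-sets each containing a member of the intersecting family $\mathcal{F}$ meet. Hence $(G\cup A)\cap(S\cup\{1\})\neq\emptyset$. Since $1\notin A$ and $1\in F'$, we have $1\notin G\cup A$. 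Also $A\subseteq F'$ is disjoint from $S\subseteq X$. Therefore $G\cap S\neq\emptyset$.

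\textbf{Step 2: apply Theorem \ref{cros}.} On the $m$-element ground set $X$, apply Theorem \ref{cros} with the roles swapped. The big family is $\mathcal{E}_1\subseteq\binom{X}{\ell}$, and the other family is $\mathcal{F}'(A,F')\subseteq\binom{X}{q}$ with $q=2k-1-|A|$. The size condition $m\ge \ell+q$ holds easily since $n\ge 60k^{3/2}$.

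Note that $m-\ell=n-4k+3$. In the form of Theorem \ref{cros}, we need $|\mathcal{E}_1|\ge\binom{x}{m-\ell}$ and then conclude $|\mathcal{F}'(A,F')|\le \binom{m}{q}-\binom{x}{q}$.

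This form does not match the target directly. The bound we want is of the shape $\binom{m-\sqrt{k}}{m-q}$, where $m-q=m-2k+1+|A|$ as in the statement. So it is cleaner to use the complemented form: Katona's observation plus Theorem \ref{L79}, applied to $\mathcal{F}'(A,F')$ rather than to $\mathcal{E}_1$.

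Concretely, argue by contradiction. Suppose $|\mathcal{F}'(A,F')|>\binom{m-\sqrt{k}}{m-q}$. Pass to complements in $X$: the family $\{X\setminus G\}$ consists of $(m-q)$-sets and has size $\binom{y}{m-q}$ with $y>m-\sqrt{k}$.

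By Theorem \ref{L79}, its $\ell$-shadow has size at least $\binom{y}{\ell}>\binom{m-\sqrt{k}}{\ell}$. This needs $\ell\le m-q$, which holds since $m-q\ge n-4k+2\ge \ell$.

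Cross-intersection means $\mathcal{E}_1$ avoids this shadow. Indeed, if $S\subseteq X\setminus G$ then $S\cap G=\emptyset$. Hence
\[
|\mathcal{E}_1|\le\binom{m}{\ell}-\binom{y}{\ell}<\binom{m}{\ell}-\binom{m-\sqrt{k}}{\ell},
\]
which contradicts Claim \ref{cl1}.

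\textbf{Main obstacle.} The main point of care is the bookkeeping in Step 2: matching the level $m-q=m-2k+1+|A|$ in the statement with the complement construction, and checking that the hypotheses of Theorem \ref{L79} hold. We need $m-q\le y\le m$ and $\ell\le m-q$. We also need to handle the edge case where the assumed size exceeds $\binom{m}{m-q}$, in which case the claim is trivial.

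If $\sqrt{k}$ is not an integer, binomials with real upper argument are understood as in Theorem \ref{L79}, and the argument goes through unchanged.
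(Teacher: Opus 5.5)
Your proof is correct and is essentially the paper's argument. Both show that $\mathcal{E}_1$ and $\mathcal{F}'(A,F')$ are cross-intersecting on $[n]\setminus F'$, and the paper then applies Theorem \ref{cros} in contrapositive form with $\mathcal{F}'(A,F')$ as the family carrying the size lower bound, which is exactly your complement-plus-Lovász argument written out. Your extra edge case, a size exceeding $\binom{m}{m-q}$, cannot occur, since $\mathcal{F}'(A,F')\subseteq\binom{X}{q}$.
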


\begin{proof}
Note that $\mathcal{F}'(A, F') \subseteq \binom{[n] \setminus F'}{2k-1-|A|}$. For any $H \in \mathcal{E}_1$ and any $T \in \mathcal{F}'(A, F')$, the sets $H \cup \{1\}$ and $T \cup A$ belong to $\mathcal{F}'$. Since $\mathcal{F}'$ is intersecting and $1 \notin A, T$ and $S\subseteq \overline{F'}$, we have $H\cap T\neq \emptyset$.
Hence, the families $\mathcal{E}_1$ and $\mathcal{F}'(A, F')$ are cross intersecting. Applying Theorem \ref{cros} together with the bound $|\mathcal{E}_1| > \binom{m}{\ell} - \binom{m - \sqrt{k}}{\ell}$ yields the desired inequality.
\end{proof}

\begin{claim}\label{cl3}
$
|\mathcal{F}'(\overline{1})| \le \binom{n - \sqrt{k} - 1}{n - 2k}.
$
\end{claim}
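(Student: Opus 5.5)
We decompose $\mathcal{F}'(\overline{1})$ according to the trace on $F'=[2k-1]$. Every $G\in\mathcal{F}'$ with $1\notin G$ satisfies $G\cap F'=A$ for a unique $A\subseteq F'\setminus\{1\}=[2,2k-1]$. Since $\mathcal{F}'$ is intersecting and $F'\in\mathcal{F}'$, we also have $A\neq\emptyset$. Thus
\[
|\mathcal{F}'(\overline{1})| = \sum_{\emptyset\neq A\subseteq [2,2k-1]} |\mathcal{F}'(A,F')| \le \sum_{a=1}^{2k-2}\binom{2k-2}{a}\binom{m-\sqrt{k}}{m-2k+1+a},
\]
where each term is bounded by Claim \ref{cl2}. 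Recall that $m=n-2k+1$.

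To compare this with the target, we count the same object in the extremal configuration. The quantity $\binom{n-\sqrt{k}-1}{n-2k}=\binom{n-1-\sqrt{k}}{2k-1-\sqrt{k}}$ counts $(2k-1)$-subsets of $[2,n]$ containing a fixed $\sqrt{k}$-set $Y\subseteq[n]\setminus F'$. These are split by their intersection $A$ with $[2,2k-1]$, which has size $a$. The rest is a set of size $2k-1-a$ in $[n]\setminus F'$ containing $Y$, and there are $\binom{m-\sqrt{k}}{2k-1-a-\sqrt{k}}=\binom{m-\sqrt{k}}{m-2k+1+a}$ such sets. By Vandermonde's identity,
\[
\binom{n-1-\sqrt{k}}{n-2k}=\sum_{a\ge 0}\binom{2k-2}{a}\binom{m-\sqrt{k}}{m-2k+1+a}.
\]
Our sum is exactly the $a\ge1$ part of this identity. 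Hence the claimed bound follows immediately, and it even holds with the $a=0$ term $\binom{m-\sqrt k}{m-2k+1}$ to spare.

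The main obstacle is the boundary behaviour of Claim \ref{cl2} when $2k-1-a<\sqrt{k}$. In that range the binomial $\binom{m-\sqrt{k}}{m-2k+1+a}$ has lower index exceeding the upper one and equals $0$. We must check that this is genuinely correct, not an artifact. It is: $\mathcal{E}_1$ is so large that any set of size below $\sqrt{k}$ misses some member of it. Concretely, Theorem \ref{cros} with $|\mathcal{E}_1|>\binom{m}{\ell}-\binom{m-\sqrt k}{\ell}$ forces the cross-intersecting family $\mathcal{F}'(A,F')$ to be empty in this case. If $\sqrt{k}$ is not an integer, the binomials are interpreted as real-valued via Theorem \ref{L79}, and Vandermonde's identity needs replacing by a direct monotonicity check. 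I expect to handle this by working with $\lceil\sqrt{k}\rceil$, or by bounding the sum termwise by the corresponding terms of the identity. Once the indices are consistent, the identity closes the argument.
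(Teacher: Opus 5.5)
Your proposal is correct and is essentially the paper's proof: you split $\mathcal{F}'(\overline{1})$ by its trace on $F'$, bound each piece by Claim~\ref{cl2}, and collapse the sum with Vandermonde's identity; dropping $A=\emptyset$ and proving Vandermonde by double counting are only cosmetic differences. The non-integer $\sqrt{k}$ boundary issue you raise is genuine and the paper passes over it silently; your $\lceil\sqrt{k}\rceil$ fix works, but it requires re-running Claim~\ref{cl1} with $\lceil\sqrt{k}\rceil$ (its slack allows this, after which $\binom{n-\lceil\sqrt{k}\rceil-1}{n-2k}\le\binom{n-\sqrt{k}-1}{n-2k}$ finishes), whereas a termwise comparison with the real-valued identity needs one more step, because some of its terms with $2k-1-a<\sqrt{k}$ are negative (they alternate in sign with rapidly shrinking size and a positive leading term, so their total is positive).
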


\begin{proof}
Decomposing $|\mathcal{F}'(\overline{1})|$ according to the intersection with $F'$ and applying Claim \ref{cl2}, we obtain
$$
\begin{aligned}
|\mathcal{F}'(\overline{1})| &= \sum_{\substack{A \subseteq F' , 1 \notin A}} |\mathcal{F}'(A, F')| \\
&\le \sum_{a=0}^{2k-2} \binom{2k-2}{2k-2-a} \binom{n - 2k + 1 - \sqrt{k}}{n - 2k + 1 - 2k + 1 + a} \\
&= \sum_{a=0}^{2k-2} \binom{2k-2}{2k-2-a} \binom{n - \sqrt{k} - 1 - (2k-2)}{n - 2k - (2k-2 - a)} \\
&= \binom{n - \sqrt{k} - 1}{n - 2k}.
\end{aligned}
$$
Thus the claim holds.
\end{proof}

Lemma \ref{lem1} now follows immediately from Claim  \ref{cl3}.

	\subsection{Proof of Lemma \ref{lem2}}
    Let  $\mathcal{F}\subseteq \binom{[n]}{k}$ be intersecting  and $\mathcal{F}' = \nabla^{2k-1}\mathcal{F}$.
	Suppose that $\gamma(\mathcal{F}') = |\mathcal{F}'(\overline{1})|$ satisfies
$
1 \le \gamma(\mathcal{F}') \le \binom{n - \sqrt{k}}{n - 2k}.
$
Then $|\gamma(\mathcal{F})|\ge 1$, and consequently
$\gamma(\mathcal{F}')\ge\binom{n-k-1}{k-1}=\binom{n-k-1}{n-2k}$.
Assume that $\gamma({\mathcal{F}'})=|\mathcal{F}'(\overline1)|=\binom{x}{n-2k}$ for some $x$ with $n-k-1\le x\le n-\sqrt{k}.$
Define
$$
\mathcal{G}' = \{[2,n] \setminus F' : F' \in \mathcal{F}'(\overline{1})\} \subseteq \binom{[2,n]}{n-2k},
$$
and $\mathcal{G} = \{[2,n] \setminus F : F \in \mathcal{F}(\overline{1})\}$. 
For  $0\le t\le k-1$, define
 $$\mathcal{G}^{k+t}=\{[2, n] \setminus T : T \in (\nabla^{k+t}\mathcal{F})(\overline{1})\}.$$ 
Observe that $(\nabla^{k+t}\mathcal{F})(\overline{1})=\{P\in\binom{[n]}{k+t} :\exists F \in \mathcal{F}, \ F\subseteq P,\ 1\notin P\}=\{P\in\binom{[2, n]}{k+t} :\exists F \in \mathcal{F}(\overline{1}), \ F\subseteq P\}=\nabla^{k+t}(\mathcal{F}(\overline{1}))
$.
Moreover,  $\mathcal{G}^{k+0}=\mathcal{G}$ and $\mathcal{G}^{k+k-1}=\mathcal{G}'$.
	  Since $\mathcal{F}'=\nabla^{2k-1}\mathcal{F}$, we have $\mathcal{G}'=\Delta^{n-2k}{\mathcal{G}^{k+t}}$ for every $0 \leq t\leq k-1$.
      Assume that $|\mathcal{G}^{k+t}|=\binom{y}{n-1-k-t}$. By Theorem \ref{L79}, we obtain $|\mathcal{G}'|\geq \binom{y}{n-2k}$.
      Since $|\mathcal{G}'|=|\mathcal{F}'(\overline1)|=\binom{x}{n-2k} $, it follows that $y\leq x$.
Consequently,  $$|\mathcal{G}^{k+t}|\le \binom{x}{n-k-t-1}.$$ Additionally, since  $n\ge 4k$ and $\mathcal{G}'=\Delta^{n-2k}{\mathcal{G}^{k+0}}=\Delta^{n-2k}{\mathcal{G}}$,  we have $\Delta^i{\mathcal{G}'}=\Delta^i{\mathcal{G}}$ for $1\le i \le 2k$.
	
		\begin{claim}\label{cl4}
	For $0 \le t \le k-1$,	$\frac{|\mathcal{G}^{k+t}|}{|\Delta^{k+t-1}{\mathcal{G}}|}\le e^{-\sqrt{k}}.$ 
		
	\end{claim}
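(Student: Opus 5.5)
The plan is to compare the two families through the single parameter $x$ defined above, where $|\mathcal{G}'|=\binom{x}{n-2k}$ and $n-k-1\le x\le n-\sqrt{k}$. That yields an upper bound on the numerator and a lower bound on the denominator, both expressed as binomial coefficients in $x$. The ratio then reduces to a ratio of two binomial coefficients $\binom{x}{\cdot}$ whose lower indices differ by about $\sqrt{k}$.

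\emph{Numerator.} We already showed above that $|\mathcal{G}^{k+t}|\le \binom{x}{n-k-t-1}$. Using $\binom{x}{n-k-t-1}=\binom{x}{s}$ with $s:=x-n+k+t+1$, and $x\le n-\sqrt{k}$, we get $s\le k+t+1-\sqrt{k}$. If $s<0$, then $|\mathcal{G}^{k+t}|=0$ and the claim is trivial, so we may assume $s\ge 0$.

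\emph{Denominator.} We have $k+t-1\le 2k-2\le 2k$, so $\Delta^{k+t-1}\mathcal{G}=\Delta^{k+t-1}\mathcal{G}'$ by the observation preceding the claim. Since $|\mathcal{G}'|=\binom{x}{n-2k}$ and $k+t-1\le n-2k$, Theorem \ref{L79} (Lov\'asz) gives
\[
|\Delta^{k+t-1}\mathcal{G}|\ge \binom{x}{k+t-1}.
\]
Hence
\[
\frac{|\mathcal{G}^{k+t}|}{|\Delta^{k+t-1}\mathcal{G}|}\le \frac{\binom{x}{s}}{\binom{x}{k+t-1}},\qquad k+t-1-s\ge \sqrt{k}-2 .
\]

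\emph{Estimating the ratio.} For $j\le k+t-1\le 2k-2$ we have
\[
\frac{\binom{x}{j-1}}{\binom{x}{j}}=\frac{j}{x-j+1}\le \frac{2k}{x-2k}\le \frac{2k}{n-3k-1}\le \frac{1}{30\sqrt{k}-2},
\]
where the middle step uses $x\ge n-k-1$ and the last uses $n\ge 60k^{3/2}$. Telescoping from $k+t-1$ down to $s$ therefore gives
\[
\frac{\binom{x}{s}}{\binom{x}{k+t-1}}\le (30\sqrt{k}-2)^{-(\sqrt{k}-2)}.
\]
It remains to check that $(\sqrt{k}-2)\ln(30\sqrt{k}-2)\ge \sqrt{k}$ for $k\ge 50$. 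Here $\ln(30\sqrt{k}-2)>5$ and $\sqrt{k}-2\ge 0.7\sqrt{k}$, so the left side exceeds $3.5\sqrt{k}$, which gives the bound $e^{-\sqrt{k}}$.

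\emph{Main obstacle.} The delicate point is the bookkeeping of index ranges. The first sub-check is that $\Delta^{k+t-1}\mathcal{G}=\Delta^{k+t-1}\mathcal{G}'$ really applies for every $0\le t\le k-1$. The second is that Lov\'asz's bound is used with $x$ in the valid range $n-2k\le x\le n-1$; this holds since $x\ge n-k-1$. The third is that every index $j$ in the telescoping satisfies $j\le 2k$, so that the ratio estimate $j/(x-j+1)\le 2k/(x-2k)$ is legitimate; this holds because $s\le j\le k+t-1\le 2k-2$.
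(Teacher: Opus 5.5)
Your proposal is correct. It uses the same two ingredients as the paper: $|\mathcal{G}^{k+t}|\le\binom{x}{n-k-t-1}$ for the numerator, and $|\Delta^{k+t-1}\mathcal{G}|=|\Delta^{k+t-1}\mathcal{G}'|\ge\binom{x}{k+t-1}$ from Theorem~\ref{L79} for the denominator.

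You differ only in how you estimate the resulting ratio of binomial coefficients.
\begin{itemize}
\item The paper writes the ratio as the long product $\prod_{i=k+t}^{n-k-t-1}\frac{x-i+1}{i}$ and replaces $x$ by $n-\sqrt{k}$ factorwise. Reindexing $i\mapsto n-1-i$ turns it into $\prod_i\bigl(1-\frac{\sqrt{k}-2}{i}\bigr)$, which has $n-2k-2t$ factors, each close to $1$. A harmonic-sum estimate then bounds it by $\bigl(\frac{k+t}{n-k-t}\bigr)^{\sqrt{k}-2}$.
\item You use the symmetry $\binom{x}{n-k-t-1}=\binom{x}{s}$ and telescope. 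After cancellation only about $\sqrt{k}-2$ factors of size $O(k/n)$ remain.
\end{itemize}
The two bounds are of the same order. Your version shows the cancellation more transparently; it is the device the paper itself uses later, in the proof of Lemma~\ref{lem4}.

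One step needs an extra line: integrality. The number $x$ is real, defined by $|\mathcal{G}'|=\binom{x}{n-2k}$. So the step count $k+t-1-s=n-x-2$ is in general not an integer. Unit-step telescoping from $k+t-1$ then does not land on $s$, and $\binom{x}{s}$ only makes sense through the Gamma function. The paper's form avoids this, because all lower indices stay integers and $x$ enters only through the positive factors $x-i+1$.

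A simple fix within your approach:
\begin{itemize}
\item For $x\ge n-k-1$, each factor $\frac{x-i+1}{i}$ is positive and increasing in $x$. So the ratio can only grow if you replace $x$ by the integer $x^{*}=n-\lfloor\sqrt{k}\rfloor$, which satisfies $x^{*}\ge x$.
\item The telescoping then has exactly $\lfloor\sqrt{k}\rfloor-2\ge\sqrt{k}-3$ steps. Each step is still at most $\frac{1}{30\sqrt{k}-2}$, since $x^{*}\ge n-k-1$.
\item Finally, $(\sqrt{k}-3)\ln(30\sqrt{k}-2)\ge\sqrt{k}$ holds easily for $k\ge 50$.
\end{itemize}
Separately, your case $s<0$ never occurs, since $x\ge n-k-1$ gives $s\ge t\ge 0$.
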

	\begin{proof}
Since  $\Delta^{k+t-1}{\mathcal{G}}=\Delta^{k+t-1}{\mathcal{G}'}$ and $|\mathcal{G}'|=\binom{x}{n-2k} $, 
  it follows from  Theorem \ref{L79} that
  $|\Delta^{k+t-1}{\mathcal{G}}|\geq\binom{x}{k+t-1}$.
Therefore, we have
\begin{align*} 
\frac{|\mathcal{G}^{k+t}|}{|\Delta^{k+t-1}{\mathcal{G}}|}&\le \frac{\binom{x}{n-k-t-1}}{\binom{x}{k+t-1}}
= \prod\limits_{i=k+t}^{n-k-t-1} \frac{x-i+1}{i}\\
&\le \prod\limits_{i=k+t}^{n-k-t-1} \frac{n-\sqrt{k}-i+1}{i}
=\prod\limits_{i=k+t}^{n-k-t-1} \frac{i-\sqrt{k}+2}{i}
\\
&
=\prod\limits_{i=k+t}^{n-k-t-1} \left(1-\frac{\sqrt{k}-2}{i}\right)\\
  &\le e^{-(\sqrt{k}-2)\sum\limits_{i=k+t}^{n-k-t-1}\frac{1}{i}}\\
  &\le e^{-(\sqrt{k}-2)\ln \frac{n-k-t}{k+t}} 
  \le e^{-\sqrt{k}},
\end{align*}
  where the last inequality follows from 
	 $\sqrt{k}-2\ge \frac{\sqrt{k}}{2}$ and $\ln \left(\frac{n-k-t}{k+t}\right)\ge \ln \left(\frac{n-2k}{2k}\right)\ge 2$.
	\end{proof}

    For convenience, we write $\mathcal{SD}=\mathcal{SD(F)}$.
	Observe that $|\mathcal{SD}|=|\mathcal{SD}(1)|+|\mathcal{SD}(\overline{1})|$. We first consider $\mathcal{SD}(\overline{1})$.
	For $0\le i\le k-2$, we have 
     \begin{align*}
    \sum\limits_{i=0}^{k-2} |\mathcal{SD}^{(2i)}(\overline{1})| \leq \sum\limits_{i=0}^{k-2} \binom{n-1}{2i}. 
    \end{align*}
 The remaining part $\mathcal{SD}^{(2k-2)}(\overline{1})$ may arise from symmetric differences between two sets both in $\mathcal{F}(1)$, or both in $\mathcal{F}(\overline{1})$.
	By the same reasoning as in  (\ref{fo2}), we have  	$|\mathcal{SD}(\mathcal{F}(\overline{1}))^{(2k-2)}|\le (2k-1)|\mathcal{F}'(\overline{1})|$.
    Applying Claim \ref{cl4} yields
   $$|\mathcal{SD}(\mathcal{F}(\overline{1}))^{(2k-2)}| \le 2k|\mathcal{G}'|\le \frac{2k}{e^{\sqrt{k}}}|\Delta^{2k-2}{\mathcal{G}}|\le \frac{1}{3}|\Delta^{2k-2}{\mathcal{G}}|.$$
		For any $H\in \mathcal{SD}(\mathcal{F}(1))^{(2k-2)}$, we have ${\{1}\} \cup H \in \mathcal{F}'$. If $H\in \Delta^{2k-2}{\mathcal{G}}$, then there exists $F' \in \mathcal{F}'(\overline{1})$ such that $H\cap F'=\emptyset$. This would imply  $({\{1}\} \cup H ) \cap  F'=\emptyset$, contradicting the intersecting property of $\mathcal{F}'$.
        Hence, $H\notin \Delta^{2k-2}{\mathcal{G}}$.
        Consequently, 
	$$|\mathcal{SD}(\mathcal{F}(1))^{(2k-2)}|\le \binom{n-1}{2k-2}-|\Delta^{2k-2}{\mathcal{G}}|.$$
Therefore,
\begin{align*}
|\mathcal{SD}^{(2k-2)}(\overline{1})|&=|\mathcal{SD}(\mathcal{F}(\overline{1}))^{(2k-2)}|+|\mathcal{SD}(\mathcal{F}(1))^{(2k-2)}|\\
&\le \binom{n-1}{2k-2}-|\Delta^{2k-2}{\mathcal{G}}| +\frac{1}{3} |\Delta^{2k-2} \mathcal{G}|.
\end{align*}
Since  $|\mathcal{SD}(\overline{1})|= \sum\limits_{i=0}^{k-2} |\mathcal{SD}^{(2i)}(\overline{1})| +|\mathcal{SD}^{(2k-2)}(\overline{1})|$, we obtain
 \begin{align}
   |\mathcal{SD}(\overline{1})|\leq \sum\limits_{i=0}^{k-2} \binom{n-1}{2i}+\binom{n-1}{2k-2}-|\Delta^{2k-2}{\mathcal{G}}| +\frac{1}{3} |\Delta^{2k-2} \mathcal{G}|. \label{fo5}
    \end{align}

We now turn to $\mathcal{SD}(1)$. Any set in $\mathcal{SD}(1)$ must be the symmetric difference of one set from $\mathcal{F}(1)$ and one from $\mathcal{F}(\overline{1})$.
Hence, for any $D\in \mathcal{SD}(1)$,  we have $|D|=2i+1$ with $0\le i\le k-2$. 
For any $D \in \mathcal{SD}(1)^{(2i+1)}$, there exist $A \in \mathcal{F}(1)$ and $B \in \mathcal{F}(\overline{1})$ such that $D = A \triangle B$. Then  $|A\cup B|=k+i$. Thus, $D$ is contained in some member of $\nabla^{k+i}(\mathcal{F}(\overline{1}))=\{Q\in\binom{[2,n]}{k+i} :\exists F \in \mathcal{F}(\overline{1}), \ F\subseteq Q\}$.  Each member of $\nabla^{k+i}(\mathcal{F}(\overline{1}))$ may contain at most $\binom{k+i}{2i+1}$ such symmetric differences. 
Therefore, 
 \begin{align}
 |\mathcal{SD}(1)^{(2i+1)}|\le \binom{k+i}{2i+1}|\nabla^{k+i}(\mathcal{F}(\overline{1}))|=\binom{k+i}{k-i-1}|\nabla^{k+i}(\mathcal{F}(\overline{1}))|.\label{fo6}
 \end{align}
 
Recall that  $\mathcal{G} = \{[2,n] \setminus F : F \in \mathcal{F}(\overline{1})\}$.

\begin{claim}\label{cl6}
For $1\le i\le2k-2$,	we have $|\Delta^{i-1}(\mathcal{G})|\le \frac{1}{4\sqrt{k}}|\Delta^{i}(\mathcal{G})|$. In particular,  for $0\le t\le k-1$,  $$|\Delta^{k+t-1} \mathcal{G}| \leq \left(\frac{1}{4\sqrt{k}}\right)^{k-t-1} |\Delta^{2k-2} \mathcal{G}|.$$
\end{claim}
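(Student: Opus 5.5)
The plan is a local double-counting (local LYM-type) argument that uses only one fact: every member of $\mathcal{G}$ is large. We never need the value of $|\mathcal{G}'|$ or the bound $x\le n-\sqrt{k}$. Recall that $\mathcal{G}\subseteq\binom{[2,n]}{n-1-k}$. Fix $i$ with $1\le i\le 2k-2$. We double count the pairs $(P,Q)$ with $P\in\Delta^{i-1}\mathcal{G}$, $Q\in\Delta^{i}\mathcal{G}$ and $P\subseteq Q$.

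\emph{Counting from $Q$.} Each $Q\in\Delta^i\mathcal{G}$ has exactly $i$ subsets of size $i-1$. Each such subset is automatically in $\Delta^{i-1}\mathcal{G}$, because any $G\in\mathcal{G}$ containing $Q$ also contains it. So the number of pairs is exactly $i\,|\Delta^i\mathcal{G}|$.

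\emph{Counting from $P$.} For $P\in\Delta^{i-1}\mathcal{G}$, choose $G\in\mathcal{G}$ with $P\subseteq G$. Every set $P\cup\{y\}$ with $y\in G\setminus P$ is an $i$-subset of $G$, so it lies in $\Delta^i\mathcal{G}$. Hence $P$ lies in at least $|G|-(i-1)=n-k-i$ members of $\Delta^i\mathcal{G}$. This count is positive, since $i\le 2k-2<n-k$.

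Comparing the two counts gives
\[
|\Delta^{i-1}\mathcal{G}|\le \frac{i}{n-k-i}\,|\Delta^{i}\mathcal{G}|\le \frac{2k-2}{n-3k+2}\,|\Delta^{i}\mathcal{G}|.
\]
It remains to check that $\frac{2k-2}{n-3k+2}\le \frac{1}{4\sqrt{k}}$. This is equivalent to $8\sqrt{k}(k-1)\le n-3k+2$. It holds because $n\ge 60k^{3/2}$ gives $n-3k\ge 57k^{3/2}> 8k^{3/2}$ (using $3k\le 3k^{3/2}$). This proves the first assertion.

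For the second assertion, fix $0\le t\le k-1$. Apply the first inequality successively for $i=2k-2,\,2k-3,\,\dots,\,k+t$; all of these indices lie in $[1,2k-2]$. This walks the shadow level down from $2k-2$ to $k+t-1$ in $(2k-2)-(k+t-1)=k-t-1$ steps, and multiplying the inequalities gives the factor $\left(\frac{1}{4\sqrt{k}}\right)^{k-t-1}$. When $t=k-1$ there are no steps and the statement is trivial.

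I do not expect a serious obstacle. The only delicate point is the direction of the bound: we need the lower shadow to be \emph{small} relative to the upper one, which is the opposite of what Kruskal--Katona/Theorem~\ref{L79} gives. That is why I would use the upward degree count, which relies on every shadow set sitting inside a set of size $n-1-k\gg i$, rather than any shadow-size theorem.
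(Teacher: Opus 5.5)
Your proposal is correct and matches the paper's proof. Both double count the inclusions $P\subseteq Q$ between $\Delta^{i-1}\mathcal{G}$ and $\Delta^{i}\mathcal{G}$ to get $i\,|\Delta^{i}\mathcal{G}|\ge (n-k-i)\,|\Delta^{i-1}\mathcal{G}|$, check the constant using $n\ge 60k^{3/2}$, and then iterate over $k-t-1$ levels; your bound via $\frac{2k-2}{n-3k+2}$ is just a slightly tighter version of the paper's $\frac{2k}{8k\sqrt{k}}$.
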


\begin{proof}
Consider the bipartite graph between $\Delta^{i-1}\mathcal{G}$ and $\Delta^{i}\mathcal{G}$, where  $A\in \Delta^{i-1}\mathcal{G}$ and $B\in \Delta^{i}\mathcal{G}$ are adjacent if $A \subseteq B$. Let $e$ denote the number of edges.
On the one hand, each set $B \in \Delta^{i}\mathcal{G}$ contains exactly $i$ subsets of size $i-1$. Thus, $e = i |\Delta^{i}\mathcal{G}|$.
On the other hand, for any $A \in \Delta^{i-1}\mathcal{G}$, there exists some $G \in \mathcal{G}$ with $A \subseteq G$. The number of $i$-sets $B$ such that $A \subseteq B \subseteq G$ is at least $n-k-i$. Hence, each $A$ contributes at least $n-k-i$ edges, giving $e \ge (n-k-i) |\Delta^{i-1}\mathcal{G}|$.
Combining the two estimates yields
$
i |\Delta^{i}\mathcal{G}| \ge (n-k-i) |\Delta^{i-1}\mathcal{G}|.
$
Since $n-k-i \ge 60k\sqrt{k} - 3k \ge 8k\sqrt{k}$ and $i \le 2k-2$, we obtain
$$
|\Delta^{i-1}\mathcal{G}| \le \frac{i}{n-k-i} |\Delta^{i}\mathcal{G}| \le \frac{2k}{8k\sqrt{k}} |\Delta^{i}\mathcal{G}| = \frac{1}{4\sqrt{k}} |\Delta^{i}\mathcal{G}|.
$$
Repeated application of this inequality gives the second bound of the claim.		
\end{proof}

Applying  Claims \ref{cl4} and  \ref{cl6} together with (\ref{fo6}), we obtain
\begin{align*}
|\mathcal{SD}(1)|&\le \sum\limits_{i=0}^{k-2}|\mathcal{SD}(1)^{(2i+1)}|
\le \sum\limits_{i=0}^{k-2}\binom{k+i}{k-i-1}|\nabla^{k+i}(\mathcal{F}(\overline{1}))|\\
&=\sum\limits_{i=0}^{k-2}\binom{k+i}{k-i-1}|\mathcal{G}^{k+i}|
\le\sum\limits_{i=0}^{k-2}\frac{1}{e^{\sqrt{k}}}|\Delta^{k+i-1}(\mathcal{G})|\binom{2k}{k-i-1}\\
&\le \sum\limits_{i=0}^{k-2}\frac{1}{e^{\sqrt{k}}}\left( \frac{1}{4\sqrt{k}} \right)^{k-i-1}|\Delta^{2k-2} \mathcal{G}|\binom{2k}{k-i-1}\\
&\le \frac{|\Delta^{2k-2} \mathcal{G}|}{e^{\sqrt{k}}}\sum\limits_{i=1}^{k-1}{\left( \frac{1}{4\sqrt{k}}\right)}^i\binom{2k}{i}\\
&\le\frac{|\Delta^{2k-2} \mathcal{G}|}{e^{\sqrt{k}}}\sum\limits_{i=0}^{2k}{\left( \frac{1}{4\sqrt{k}}\right)}^i\binom{2k}{i}\\
	&=\frac{|\Delta^{2k-2} \mathcal{G}|}{e^{\sqrt{k}}}\left(1+\frac{1}{4\sqrt{k}} \right)^{2k}\\
&\le\frac{|\Delta^{2k-2} \mathcal{G}|}{e^{\sqrt{k}}}e^{\frac{2k}{4\sqrt{k}}}
= \frac{|\Delta^{2k-2} \mathcal{G}|}{e^{\frac{\sqrt{k}}{2}}}
<\frac{1}{6}|\Delta^{2k-2} \mathcal{G}|.
\end{align*}
Combining this with (\ref{fo5}), we get   
\begin{align*}|\mathcal{SD}|&=|\mathcal{SD}(1)|+|\mathcal{SD}(\overline{1})|\\
&<\sum\limits_{i=0}^{k-2} \binom{n-1}{2i}+\binom{n-1}{2k-2}-|\Delta^{2k-2}{\mathcal{G}}| +\frac{1}{3} |\Delta^{2k-2} \mathcal{G}|+\frac{1}{6} |\Delta^{2k-2} \mathcal{G}|\\
&= \sum_{i=0}^{k-1} \binom{n-1}{2i} - \frac{1}{2} |\Delta^{(2k-2)}\mathcal{G}<  \sum_{i=0}^{k-1} \binom{n-1}{2i}.
\end{align*}
This completes the proof of Lemma \ref{lem2}.

\section{The range $100 k\ln k \le n< 60k^\frac{3}{2}$}\label{se4}

\subsection{Reduction to two lemmas and elementary estimates}
 Let  $\mathcal{F}\subseteq \binom{[n]}{k}$ be intersecting  and let $\mathcal{F}' = \nabla^{2k-1}\mathcal{F}=\{Q\in\binom{[n]}{2k-1} :\exists F \in \mathcal{F}, \ F\subseteq Q\}$. We shall need the following two lemmas.

\begin{lemma}\label{lem3}
		Let $\mathcal{F}\subseteq \binom{[n]}{k}$ be intersecting. Assume that  $k\ge 50$ and $100 k\ln k \le n< 60k^\frac{3}{2}$, and let $
r = \left\lfloor \frac{n}{30k} \right\rfloor$. If $|\mathcal{SD}(\mathcal{F})|\geq \sum\limits_{\ell=0}^{k-1}\binom{n-1}{2\ell}$,
then 
$$\gamma(\mathcal{F}')\le \binom{n-r-1}{n-2k}.$$
	
	\end{lemma}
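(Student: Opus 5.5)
The plan is to rerun the proof of Lemma \ref{lem1} with $\sqrt{k}$ replaced by $r=\lfloor n/(30k)\rfloor$, redoing every numerical estimate for the smaller range $C=n/k\ge 100\ln k$. Since $C<60\sqrt{k}$, we have $r<2\sqrt{k}$. Also $r\ge 3$, because $k\ge 50$ gives $100\ln k/30>13$.

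\textbf{Step 1: the top layer is dense.} Bound $|\mathcal{SD}(\mathcal{F})^{(2k-2)}|$ from below exactly as in (\ref{fo1}). This only needs $C\ge 6$, so it still gives $|\mathcal{SD}(\mathcal{F})^{(2k-2)}|\ge (1-\tfrac{3}{C})\binom{n}{2k-2}$. As in (\ref{fo2}) and (\ref{fo3}), it also gives $|\mathcal{F}'|\ge \frac{1}{2n}\binom{n}{2k-1}$.

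\textbf{Step 2: concentration.} Apply Theorem \ref{concen} with the same parameters: $m=n$, $\ell=2k-2$, $\ell'=2k-1$, $t=\lfloor C/2\rfloor-1$, and $a=\sqrt{2\ln 8n}$. The requirement is $\varepsilon<0.9-\tfrac3C$, so that $\alpha-\varepsilon>\frac1{10}$. Here
\[
\varepsilon\approx \frac{2\sqrt{2\ln(8Ck)}+2.4}{\sqrt{C/2-2}} .
\]
Using $C\ge 100\ln k$ and $\ln(8Ck)\le \ln(480k^{3/2})$, the numerator is about $2\sqrt{3\ln k+12.4}+2.4$ and the denominator is at least about $\sqrt{50\ln k-2}$. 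For $k\ge 50$ this gives a ratio of roughly $0.75$, which is below $0.9-3/C$. The same union bound against $\mathbb{P}[H\in\mathcal{F}']\ge\frac1{2n}$ then produces $F'=[2k-1]\in\mathcal{F}'$ with
\[
\Bigl|\mathcal{SD}(\mathcal{F})^{(2k-2)}\cap\tbinom{\overline{F'}}{2k-2}\Bigr|\ge\tfrac1{10}\tbinom{n-2k+1}{2k-2}.
\]
I expect this numerical check to be the main obstacle. It is where the logarithmic hypothesis is genuinely used, and it may require tightening constants, for instance taking $a$ slightly smaller or using $\alpha\ge 1-3/C$ more carefully.

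\textbf{Step 3: analogue of Claim \ref{cl1}.} Split into the pairwise cross-intersecting families $\mathcal{E}_x$ for $x\in F'$, and set $m=n-2k+1$, $\ell=2k-2$. The partition argument with Theorem \ref{cros} goes through verbatim. It only needs $\frac{\ell}{m}$ small, and here $\ell/m\le 2/(C-2)<\frac1{40}$. So some $|\mathcal{E}_1|>\frac1{12}\binom{m}{\ell}$. To conclude $|\mathcal{E}_1|>\binom{m}{\ell}-\binom{m-r}{\ell}$, it suffices that $\binom{m}{\ell}/\binom{m-r}{\ell}\le \exp\!\bigl(\ell r/(m-r-\ell+1)\bigr)<\frac{12}{11}$. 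Since $\ell r\le 2k\cdot n/(30k)=n/15$ and $m-r-\ell+1\ge n-5k\ge 0.95n$, the exponent is at most $1/14$, so the bound holds. This explains the choice of the constant $30$ in $r$.

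\textbf{Step 4: the counting.} Repeat Claims \ref{cl2} and \ref{cl3} with $r$ in place of $\sqrt{k}$. For each $A\subseteq F'$ with $1\notin A$, the families $\mathcal{E}_1$ and $\mathcal{F}'(A,F')$ are cross-intersecting. Theorem \ref{cros} then gives
\[
|\mathcal{F}'(A,F')|\le\binom{m-r}{m-2k+1+|A|}.
\]
Summing over $A$ via the Vandermonde identity gives $|\mathcal{F}'(\overline{1})|\le\binom{n-r-1}{n-2k}$, hence $\gamma(\mathcal{F}')\le\binom{n-r-1}{n-2k}$.
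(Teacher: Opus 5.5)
Your proposal is correct and is essentially the paper's proof: the same concentration step producing $F'$, the same partition argument via Theorem \ref{cros} yielding a large $\mathcal{E}_1$, and the same cross-intersection bound plus Vandermonde count. The only variation is that the paper bounds $\binom{m-r}{\ell}/\binom{m}{\ell}\ge 1-\frac{r\ell}{m-\ell+1}$ instead of using your exponential estimate, which is inessential.

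Two numerical corrections, neither affecting the argument:
\begin{enumerate}
\item In Step 2, at $k=50$ the bound on $\varepsilon$ is about $0.88$, not $0.75$. It still lies below $0.9-3/C$, since $C\ge 100\ln 50>391$, and the expression is decreasing in $\ln k$. This is exactly Lemma \ref{le5}(1), so no tightening of constants is needed, though the margin is thin.
\item In Step 3, getting the factor $\frac{1}{12}$ requires $\ell/m<\frac{1}{60}$, not your stated $\frac{1}{40}$. This does hold, because $\ell/m\le 2/(C-2)<0.01$.
\end{enumerate}
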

	\begin{lemma}\label{lem4}
		Let $\mathcal{F}\subseteq \binom{[n]}{k}$ be intersecting.  Assume that  $k\ge 50$ and $100 k\ln k \le n< 60k^\frac{3}{2}$,  and let $
r = \left\lfloor \frac{n}{30k} \right\rfloor$.
If 
$1\le \gamma(\mathcal{F}')\le \binom{n-r}{n-2k}$, then 
$$
|\mathcal{SD}(\mathcal{F})|< \sum\limits_{\ell=0}^{k-1}\binom{n-1}{2\ell}.
$$
	\end{lemma}

For  $100 k\ln k \le n< 60k^\frac{3}{2}$,  Theorem \ref{M1} follows directly from the two lemmas above. 
 If $\gamma(\mathcal{F}) = 0$, then $\mathcal{F}$ is a  star and Theorem \ref{M1} holds.  If $\gamma(\mathcal{F}) \ge 1$, then $\gamma(\mathcal{F}') \ge \binom{n-k-1}{k-1} > 0$. 
Suppose, for contradiction, that  $|\mathcal{SD}(\mathcal{F})|\geq \sum\limits_{\ell=0}^{k-1}\binom{n-1}{2\ell}$. Applying Lemma \ref{lem3} yields $\gamma(\mathcal{F}') \leq \binom{n-r-1}{n-2k}\leq \binom{n-r}{n-2k}$. By Lemma \ref{lem4}, we have $
|\mathcal{SD}(\mathcal{F})|< \sum\limits_{\ell=0}^{k-1}\binom{n-1}{2\ell}
$,  a contradiction. Therefore, Theorem \ref{M1} holds in the range $100 k\ln k \le n< 60k^\frac{3}{2}$.

It remains to prove Lemmas \ref{lem3} and \ref{lem4}. The following elementary estimates will be used in both proofs.
For the rest of the proof, set  $C = \frac{n}{k}$ and $L=\ln k$.
Then $100 \ln k\leq  C<60\sqrt{k}$.
Let $
r = \left\lfloor \frac{C}{30} \right\rfloor$.
Since $k\geq 50$, we have $L>3.9, C>300$ and $r>\frac{C}{40}$.

\begin{lemma}\label{le5}
Assume that $k\geq 50$ and
$
100\ln k \le C < 60\sqrt{k}.
$
Let
$
q = \frac{3}{C}$ and $M = \ln \frac{C}{3}.
$
Then $M > 4$, and the following estimates hold.

\begin{enumerate}[(1)]
\item If
$
a = \sqrt{2\ln(8n)}$ and $t = \left\lfloor \frac{C}{2} \right\rfloor - 1,
$
then
\[
\frac{2a + \sqrt{8\ln 2}}{\sqrt{t}} < 0.88.
\]

\item For every real $h$ satisfying $r \le h \le k+1$, one has
\[
2k q^{h-2} < \frac{1}{8}.
\]

\item For every real $h$ satisfying $r \le h \le k+1$, one has
\[
\sum_{j=1}^{k-1} \min\left\{ e^{4h/C} q^{2j-1}, \binom{2k}{j} q^{h+j-2} \right\} < \frac{1}{8}.
\]
\end{enumerate}
\end{lemma}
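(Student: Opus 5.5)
This is a purely numerical lemma, so the plan is to verify each estimate directly from $k\ge 50$, $100\ln k\le C<60\sqrt{k}$, $n=Ck$ and $r=\lfloor C/30\rfloor>C/40$. First, $M=\ln(C/3)>4$ because $C\ge 100\ln 50>391$, so $C/3>130>e^4$.

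\emph{Part (1).} We have $t\ge C/2-2$ and $a^2=2\ln(8Ck)$. Since $C<60\sqrt{k}$, we get $\ln(8Ck)<\ln 480+\tfrac32 L$. The trick is to bound $L$ by $C$ using $C\ge 100L$, which gives $L\le C/100$. Then
\[
a\le\sqrt{2\ln 480+3C/100}.
\]
The quantity $(2a+2.36)/\sqrt{C/2-2}$ is then a decreasing-type function of $C$ in the relevant regime. For large $C$ its leading term is $2\sqrt{0.03C}/\sqrt{C/2}=2\sqrt{0.06}\approx 0.49$. At the smallest value $C\approx 391$ the remaining terms also stay small: $\sqrt{12.35+11.7}\approx 4.9$, so the numerator is about $12.2$, and the denominator $\sqrt{193.5}$ is about $13.9$. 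This gives about $0.877<0.88$. This case is tight, so I would check monotonicity by writing the expression as $\sqrt{A/C+B}$-type terms over $\sqrt{1/2-2/C}$, or simply bound each piece at $C=391$ and note that each ratio decreases in $C$.

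\emph{Part (2).} Since $q<1$, the worst case is $h=r$, giving $2k(3/C)^{r-2}=2k\,e^{-M(r-2)}$. Use $r-2>C/40-2$ and $M>4$, together with $C\ge 100L$, i.e. $k\le e^{C/100}$. It then suffices that
\[
\ln 2+\frac{C}{100}-4\left(\frac{C}{40}-2\right)<-\ln 8 .
\]
This holds since $0.1C-0.01C=0.09C>35>8+\ln 16$ for $C>391$.

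\emph{Part (3).} This is the main obstacle, because neither term of the minimum alone works for all $j$. I would split the sum at $j_0\approx k/C$ (or a similar threshold).

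For small $j$ I use the second term with $\binom{2k}{j}\le (2ek/j)^j$. This gives
\[
q^{h-2}\,\bigl(2ekq/j\bigr)^j=q^{h-2}\,\bigl(6ek/(Cj)\bigr)^j .
\]
The factor $q^{h-2}$ has already been shown in (2) to be below $1/(16k)$. The sum over $j$ of $(6ek/(Cj))^j$ is at most $e^{6ek/C}$, which is too large unless it is controlled. So instead I use $\binom{2k}{j}q^j\le (2kq)^j/j!$ and bound the total by $q^{h-2}e^{2kq}=q^{h-2}e^{6k/C}$. This is fine only when $6k/C$ is dominated by $M(h-2)$.

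When that fails, namely for large $k$ relative to $C$, I switch for $j\ge j_0$ to the first term. Its sum is a geometric series,
\[
e^{4h/C}\sum_{j\ge j_0} q^{2j-1}\le 2e^{4h/C}q^{2j_0-1},
\]
and with $h\le k+1$ we have $e^{4h/C}\le e^{4k/C+1}$. Choosing $j_0=\lceil 2k/(CM)\rceil+1$ makes $q^{2j_0-1}\le e^{-4k/C}\cdot q$, so this tail is below $2e\cdot 3/C<1/16$.

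For $j<j_0$, use the second term with $\binom{2k}{j}\le (2k)^j$. This gives at most
\[
q^{h-2}\sum_{j<j_0}(2kq)^j\le q^{h-2}\,j_0\,(6k/C)^{j_0}.
\]
I then have to check that $j_0\ln(6k/C)\lesssim (2k/(CM))\ln k$ is beaten by $(h-2)M\ge (C/40-2)M$. This uses $k\le e^{C/100}$, so $\ln k\le C/100$, together with $k/C$ versus $C^2$ from $C<60\sqrt{k}$. This is the step where the constants must be balanced carefully, and where the case split between $h$ near $r$ and $h$ large may need refining (for large $h$ the factor $q^{h}$ itself is tiny).
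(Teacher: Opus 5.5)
Your treatment of $M>4$ and of parts (1) and (2) is correct and is essentially the paper's argument. You eliminate $L$ via $L\le C/100$, whereas the paper writes everything in terms of $L$; both reduce to the extremal point $C=100\ln 50$, and your function of $C$ is indeed decreasing.

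The gap is in part (3). By bounding $e^{4h/C}\le e^{4(k+1)/C}$ you make the threshold $j_0=\lceil 2k/(CM)\rceil+1$ independent of $h$. You then need the second branch to be small for all $j<j_0$ even when $h=r$, i.e.\ roughly $j_0\ln(6k/C)<(r-2)M$. This is false, and not because your estimates are lossy. The hypothesis $C<60\sqrt{k}$ gives only a \emph{lower} bound $k>C^2/3600$; the only upper bound is $k\le e^{C/100}$. So $j_0\approx 2k/(CM)$ can be exponentially large in $C$, while $(r-2)M=O(C\ln C)$. Concretely, take $C=1500$, $k=3\cdot 10^6$ (so $100\ln k\approx 1491\le C<60\sqrt{k}$), $h=r=50$, $q=1/500$. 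Then $j_0=645$, and $j_0\ln(6k/C)\approx 6000$ against $(h-2)M\approx 300$. Already the single term $j=100$ of the second branch equals $\binom{6\cdot 10^6}{100}500^{-148}\approx 10^{120}$. No tuning of constants rescues an $h$-independent split.

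The missing idea is that the split point must depend on $h$. For $h$ near $r$ it is $e^{4h/C}$, not $q^{h}$, that is small, so the first branch already works for almost every $j$; replacing $h$ by $k+1$ throws exactly this away. Your scheme goes through if you keep $h$. Take $j_0(h)=\lceil 2h/(CM)\rceil+1$. Then $q^{2j_0-1}\le q\,e^{-4h/C}$, so the tail is at most $q/(1-q^2)<2q<1/16$. For $j<j_0(h)$, each term satisfies $\binom{2k}{j}q^{h+j-2}\le q^{h-2}k^{j}\le\exp\bigl(-(h-2)M+\frac{h}{50M}+L\bigr)$, using $2kq\le k$ and $L\le C/100$. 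Summed over at most $h+1$ values of $j$, this is tiny because $h\ge C/40\ge\frac52L$ and $M>4$.

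The paper does the same with the threshold $J=\lfloor hM/(4L)\rfloor$. For $j\le J$ it has $\binom{2k}{j}q^{h+j-2}\le e^{2M-3hM/4}$, giving a head of at most $\exp(L+2M-\frac{15}{8}LM)$. For $j>J$ the tail is at most $\exp\bigl(\ln 2+\frac{4h}{C}+M-\frac{hM^2}{2L}\bigr)$. Its exponent decreases in $h$ because $\frac4C\le\frac{1}{25L}<\frac{M^2}{2L}$, so it suffices to check $h=C/40$.
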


\begin{proof}
First, we have
$
C \ge 100\ln k \ge 100\ln 50 > 300.
$
Hence,
$
M = \ln(C/3) > \ln 100 > 4.
$

For (1), since $C < 60\sqrt{k}$, we have
$
8n = 8Ck < 480k^{3/2}.
$
Also,
$
t = \left\lfloor \frac{C}{2} \right\rfloor - 1 \ge \frac{C}{2} - 2 \ge 50L - 2,
$
where $L = \ln k$. 
Thus,
$$
\frac{2a + \sqrt{8\ln 2}}{\sqrt{t}}
\le \frac{2\sqrt{2\ln(480k^{\frac{3}{2}})} + 2.4}{\sqrt{50L - 2}}
= \frac{2\sqrt{2\ln 480 + 3L} + 2.4}{\sqrt{50L - 2}}.
$$
The last expression is decreasing for $L \ge \ln 50$, and its value at $L = \ln 50$ is less than $0.88$. This proves (1).

For (2), since $r \ge C/40$ and $C \ge 100L$, we have
$
h \ge r \ge \frac{C}{40} \ge \frac{5}{2}L.
$
Therefore,
$
\ln(2k q^{h-2}) = \ln 2 + L - (h-2)M < -\ln 8,
$
which gives $2k q^{h-2} < \frac{1}{8}$.

For (3), put
$
J = \left\lfloor \frac{hM}{4L} \right\rfloor.
$
Since $h \ge (5/2)L$ and $M > 4$, we have $J \ge 2$.
For $1 \le j \le J$, we have
$
\binom{2k}{j} q^j \le (2kq)^j \le k^j = e^{jL}.
$
Thus,
$$
\binom{2k}{j} q^{h+j-2} \le e^{2M} e^{-hM + jL} \le e^{2M} e^{-\frac{3hM}{4}}.
$$
Summing over $j \le J$ and using $J \le k$ gives
$$
\sum_{j=1}^{J} \binom{2k}{j} q^{h+j-2}
\le \exp\left( L + 2M - \frac{3hM}{4} \right)
\le \exp\left( L + 2M - \frac{15LM}{8} \right)
< \frac{1}{16}.
$$
For  $j > J$, since $q < 1/2$, we have
$
\sum_{j > J} e^{\frac{4h}{C}} q^{2j-1} \le 2 e^{\frac{4h}{C}} q^{2J+1}.
$
From
$
2J+1 \ge \frac{hM}{2L} - 1,
$
we get
$$
2 e^{\frac{4h}{C}} q^{2J+1}
\le \exp\left( \ln 2 + \frac{4h}{C} + M - \frac{hM^2}{2L} \right).
$$
Since $M > 4$ and $C \ge 100L$, we have
$
\frac{4}{C} - \frac{M^2}{2L} < 0.
$
Using $h \ge C/40$, the exponent is at most
$$
\ln 2 + M + \frac{1}{10} - \frac{CM^2}{80L}
\le \ln 2 + M + \frac{1}{10} - \frac{5}{4}M^2
< -\ln 16.
$$
Hence,
$$
\sum_{j > J} e^{4h/C} q^{2j-1} < \frac{1}{16}.
$$
The two estimates prove (3).
\end{proof}

\subsection{Proof of Lemma \ref{lem3}}

Let $C = \frac{n}{k}$. Recall that $\mathcal{F}' = \nabla^{2k-1}\mathcal{F}$ and 
$
\mathcal{SD}(\mathcal{F})^{(2k-2)} = \{D\in \mathcal{SD}(\mathcal{F}): |D|=2k-2\}.
$
As in the proof of Lemma \ref{lem1}, we have
 \begin{align}
|\mathcal{F}'| \geq \frac{1}{2n} \binom{n}{2k-1},\qquad \frac{|\mathcal{SD}(\mathcal{F})^{(2k-2)}|}{\binom{n}{2k-2}} \geq 1 - \frac{3}{C}. \label{f42}
 \end{align}
Apply Theorem \ref{concen} to $\mathcal{SD}(\mathcal{F})^{(2k-2)}\subseteq \binom{[n]}{2k-2}$ with $m=n$, $\ell=2k-2$, $\ell'=2k-1$, $t=\lfloor \frac{C}{2} \rfloor-1$, $a=\sqrt{2\ln 8n}$, $\varepsilon=\frac{2a+\sqrt{8\ln 2}}{\sqrt{t}}$, $\alpha=\frac{|\mathcal{SD}(\mathcal{F})^{(2k-2)}|}{\binom{n}{2k-2}}\ge 1-\frac{3}{C}$.
By Lemma \ref{le5} (1), we have
\begin{align*}
 \varepsilon <0.88.
\end{align*}
Together with $C>300$, this gives
$$
\alpha-\epsilon \geq 1-\frac{3}{C}-0.88 > 0.11.
$$
 Let $H$ be distributed uniformly on $\binom{[n]}{2k-1}$. 
By (\ref{f42}), we have $\mathbb P\left[H\in \mathcal{F}'\right]\ge \frac{1}{2n}$.
The concentration inequality gives
   $$\mathbb P\left[\left|\mathcal{SD}(\mathcal{F})^{(2k-2)}\cap \binom{\overline{H}}{2k-2}\right|< 0.11\binom{n-2k+1}{2k-2}\right]< 2e^{-\frac{\mathstrut{a^2}}{2}}=\frac{1}{4n}.$$
	Therefore,  there exists a set $F'\in \mathcal{F}'$ such that $\left|\mathcal{SD}(\mathcal{F})^{(2k-2)}\cap\binom{\overline{F'}}{2k-2}\right|\ge0.11\binom{n-2k+1}{2k-2}$. Without loss of generality,  assume $F'=[2k-1]$.
 Put
$$
m=n-2k+1,\qquad \ell=2k-2,\qquad M_0=\binom{m}{\ell}.
$$

For every $A\in \mathcal{SD}(\mathcal{F})^{(2k-2)}$ with $A\cap F'=\emptyset$, there exists an element $x$ such that $A\cup\{x\}\in \mathcal{F}'$. Since $\mathcal{F}'$ is intersecting and $F'\in \mathcal{F}'$, we have $(A\cup\{x\})\cap F'\neq \emptyset$. Because $A\cap F'=\emptyset$, it follows that $x\in F'$. Assign each such $A$ to one possible $x\in F'$ and denote by $\mathcal{E}_x$ the family assigned to $x$. Then
\begin{align}
\sum_{x\in F'}|\mathcal{E}_x|\geq 0.11M_0. \label{f422}
\end{align}
If $x\neq y$, then $\mathcal{E}_x$ and $\mathcal{E}_y$ are cross intersecting; otherwise $A\in \mathcal{E}_x$, $B\in \mathcal{E}_y$ and $A\cap B=\emptyset$ would imply that $A\cup\{x\}$ and $B\cup\{y\}$ are disjoint members of $\mathcal{F}'$.
Let
$$
p=\binom{m-1}{\ell-1}.
$$
Since
$
\frac{p}{M_0}=\frac{\ell}{m}\leq \frac{2k}{n-2k}=\frac{2}{C-2}<0.01,
$
(\ref{f422}) implies $\sum_{x\in F'}|\mathcal{E}_x|>3p$.

We claim that one of the families $\mathcal{E}_x$ has size at least $\sum_{y\in F'}|\mathcal{E}_y|-p$. Suppose not. Put $S=\sum_{y\in F'}|\mathcal{E}_y|$. Then $S>3p$ and $|\mathcal{E}_x|<S-p$ for every $x$. If some $|\mathcal{E}_x|>p$, then $\{x\}$ and $F'\setminus\{x\}$ give a partition whose two parts have total weights larger than $p$. If all $|\mathcal{E}_x|\leq p$, order the elements of $F'$ arbitrarily and take the first initial segment whose total weight exceeds $p$. Its weight is at most $2p$. Thus, the complementary weight is at least $S-2p>p$. Thus in either case there is a partition $F'=I_1\cup I_2$ such that
$$
\sum_{x\in I_1}|\mathcal{E}_x|>p,\qquad \sum_{x\in I_2}|\mathcal{E}_x|>p.
$$
The two unions $\bigcup_{x\in I_1}\mathcal{E}_x$ and $\bigcup_{x\in I_2}\mathcal{E}_x$ are cross intersecting subfamilies of \(\binom{[n]\setminus F'}{\ell}\), both of size larger than $p=\binom{m-1}{\ell-1}$. This contradicts Theorem \ref{cros}. Therefore, for some $x\in F'$,
\begin{align}
|\mathcal{E}_x| \ge \sum_{y\in F'}|\mathcal{E}_y| - p > (0.11 - 0.01)M_0 = 0.10M_0. \label{f423}
\end{align}

We claim that
\begin{align}
|\mathcal{E}_x| > M_0 - \binom{m-r}{\ell}. \label{f424}
\end{align}
Indeed,
$$
\frac{\binom{m-r}{\ell}}{\binom{m}{\ell}}
= \prod_{i=0}^{\ell-1} \left(1 - \frac{r}{m-i}\right)
\geq 1 - \sum_{i=0}^{\ell-1} \frac{r}{m-i}
\geq 1 - \frac{r\ell}{m-\ell+1}.
$$
Since
$
r \leq \frac{C}{30},\ \ell \leq 2k$ and $ m-\ell+1 = n-4k+4 \geq (C-4)k$,
we have
$
\frac{r\ell}{m-\ell+1} \leq \frac{C}{15(C-4)} < 0.07.
$
Therefore,
$$
M_0 - \binom{m-r}{\ell} < 0.07M_0,
$$
and (\ref{f424}) follows from (\ref{f423}).
We may assume that the index in (\ref{f424}) is $x=1$.  For every $B\subseteq F'$ with $1\notin B$, recall that
$
\mathcal{F}'(B,F')=\{T\subseteq [n]\backslash F': T\cup B\in \mathcal{F}'\}.
$
The families $\mathcal{E}_1$ and $\mathcal{F}'(B,F')$ are cross intersecting. Indeed, if $A\in \mathcal{E}_1$ and $T\in \mathcal{F}'(B,F')$ are disjoint, then $A\cup\{1\}$ and $T\cup B$ would be disjoint members of $\mathcal{F}'$. Since
$
|\mathcal{E}_1| > \binom{m}{\ell} - \binom{m-r}{\ell},
$
Theorem \ref{cros} gives
$$
|\mathcal{F}'(B,F')| \leq \binom{m-r}{m-2k+1+|B|}. 
$$
Summing this over all $B\subseteq F'\setminus\{1\}$, we obtain
$$
\begin{aligned}
|\mathcal{F}'(\overline{1})|
&\leq \sum_{B\subseteq F'\setminus\{1\}} \binom{m-r}{m-2k+1+|B|} \\
&= \sum_{b=0}^{2k-2} \binom{2k-2}{b} \binom{m-r}{m-2k+1+b} \\
&= \binom{n-r-1}{n-2k}.
\end{aligned}
$$
Consequently,
$$
\gamma(\mathcal{F}') \leq \binom{n-r-1}{n-2k}.
$$

\subsection{Proof of Lemma \ref{lem4}}

Let $C = \frac{n}{k}$. Suppose that $\gamma(\mathcal{F}') = |\mathcal{F}'(\overline{1})|$ satisfies
$
1 \le \gamma(\mathcal{F}') \le \binom{n - r}{n - 2k}.
$
Then $|\gamma(\mathcal{F})|\ge 1$, and consequently
$\gamma(\mathcal{F}')\ge\binom{n-k-1}{k-1}=\binom{n-k-1}{n-2k}$.
Assume that $\gamma({\mathcal{F}'})=|\mathcal{F}'(\overline1)|=\binom{x}{n-2k}$ for some $x$ with $n-k-1\le x\le n-r.$
Put
$
h=n-x.
$
Then
$
r\le h\le k+1. 
$

Define
$$
\mathcal{G} = \{[2,n] \setminus F : F \in \mathcal{F}(\overline{1})\}.
$$
Also define
$$
\mathcal{G}' = \{[2,n] \setminus F' : F' \in \mathcal{F}'(\overline{1})\} \subseteq \binom{[2,n]}{n-2k}.
$$
Then
$
|\mathcal{G}'|=\binom{x}{n-2k}.
$
For  $0\le t\le k-1$, define
 $$\mathcal{G}^{k+t}=\{[2, n] \setminus T : T \in (\nabla^{k+t}\mathcal{F})(\overline{1})\}.$$ 
Thus,  $\mathcal{G}^{k+0}=\mathcal{G}$ and $\mathcal{G}^{k+k-1}=\mathcal{G}'$.
 Since $\mathcal{F}'=\nabla^{2k-1}\mathcal{F}$, we have $\mathcal{G}'=\Delta^{n-2k}{\mathcal{G}^{k+t}}$ for every $0 \leq t\leq k-1$.
      Assume that $|\mathcal{G}^{k+t}|=\binom{y}{n-1-k-t}$. By Theorem \ref{L79}, we obtain $|\mathcal{G}'|\geq \binom{y}{n-2k}$.
      Since $|\mathcal{G}'|=\binom{x}{n-2k} $, it follows that $y\leq x$.
Consequently,  $$|\mathcal{G}^{k+t}|\le \binom{x}{n-k-t-1}.$$ 
Moreover, since  $n\ge 4k$ and $\mathcal{G}'=\Delta^{n-2k}{\mathcal{G}^{k+0}}=\Delta^{n-2k}{\mathcal{G}}$,  we have $\Delta^i{\mathcal{G}'}=\Delta^i{\mathcal{G}}$ for $1\le i \le 2k$.
By Theorem \ref{L79}, we have
  $$\Delta_0:=|\Delta^{2k-2}{\mathcal{G}}|=|\Delta^{2k-2}{\mathcal{G}'}|\geq\binom{x}{2k-2}$$
We shall compare all error terms with $\Delta_0$.
 For convenience, we write $\mathcal{SD}=\mathcal{SD(F)}$.
	Observe that $|\mathcal{SD}|=|\mathcal{SD}(1)|+|\mathcal{SD}(\overline{1})|$.

\textbf{ We first consider $\mathcal{SD}(\overline{1})$.}
For the layers below $2k-2$, the trivial bound gives
$$
\sum_{i=0}^{k-2}|\mathcal{SD}(\overline{1})^{(2i)}|
\le \sum_{i=0}^{k-2}\binom{n-1}{2i}. 
$$
The remaining part $\mathcal{SD}^{(2k-2)}(\overline{1})$ may arise from symmetric differences between two sets both in $\mathcal{F}(1)$, or both in $\mathcal{F}(\overline{1})$.

For any $H\in \mathcal{SD}(\mathcal{F}(1))^{(2k-2)}$, we have ${\{1}\} \cup H \in \mathcal{F}'$. If $H\in \Delta^{2k-2}{\mathcal{G}}$, then there exists $F' \in \mathcal{F}'(\overline{1})$ such that $H\cap F'=\emptyset$. This would imply  $({\{1}\} \cup H ) \cap  F'=\emptyset$, contradicting the intersecting property of $\mathcal{F}'$.
        Hence, $H\notin \Delta^{2k-2}{\mathcal{G}}$.
        Consequently, 
	$$|\mathcal{SD}(\mathcal{F}(1))^{(2k-2)}|\le \binom{n-1}{2k-2}-\Delta_0.$$

For any $D\in \mathcal{SD}(\mathcal{F}(\bar{1}))^{(2k-2)}$,  we have  	$|\mathcal{SD}(\mathcal{F}(\overline{1}))^{(2k-2)}|\le (2k-1)|\mathcal{F}'(\overline{1})|=(2k-1)|\mathcal{G}'|.$
It follows that
$$
\frac{|\mathcal{G}'|}{\Delta_0}
\le \frac{\binom{x}{n-2k}}{\binom{x}{2k-2}}
= \frac{\binom{x}{2k-h}}{\binom{x}{2k-2}}.
$$
Here we use the standard convention that $\binom{x}{y}$ for real $y$ is defined via the Gamma function.  
Since $2k-h\le 2k-2$, we obtain
$$
\frac{\binom{x}{2k-h}}{\binom{x}{2k-2}}
= \prod_{s=2k-h+1}^{2k-2}\frac{s}{x-s+1}
\le \left(\frac{2k}{x-2k+3}\right)^{h-2}
\le \left(\frac{3}{C}\right)^{h-2},
$$
where the last inequality follows from $x=n-h\geq n-k-1$ and $C>300$.
By Lemma \ref{le5} (2), we have
$$
(2k-1)|\mathcal{G}'|
\le 2k\left(\frac{3}{C}\right)^{h-2}\Delta_0
< \frac{1}{8}\Delta_0. 
$$
Consequently,
\begin{align}
|\mathcal{SD}(\overline{1})|
\le \sum_{i=0}^{k-1}\binom{n-1}{2i}
-\frac{7}{8}\Delta_0. \label{45}
\end{align}

\textbf{We next consider $\mathcal{SD}(1)$.}
Any set in $\mathcal{SD}(1)$ must be the symmetric difference of one set from $\mathcal{F}(1)$ and one from $\mathcal{F}(\overline{1})$.
Hence, for any $D\in \mathcal{SD}(1)$,  we have $|D|=2k-2j-1$ with $1\le j\le k-1$. 
For any $D \in \mathcal{SD}(1)^{(2k-2j-1)}$, there exist $A \in \mathcal{F}(1)$ and $B \in \mathcal{F}(\overline{1})$ such that $D = A \triangle B$. Then  $|A\cap B|=j$ and  $|A\cup B|=2k-j-1$. Thus, $D$ is contained in some member of $\nabla^{2k-j-1}(\mathcal{F}(\overline{1}))=(\nabla^{2k-j-1}\mathcal{F})(\overline{1})$.  Each member of $\nabla^{2k-j-1}(\mathcal{F}(\overline{1}))$ may contain at most $\binom{2k-j-1}{j}$ such symmetric differences. 
Therefore, 
 \begin{align}
 |\mathcal{SD}(1)^{(2k-2j-1)}|&\le \binom{2k-j-1}{j}|\nabla^{2k-j-1}(\mathcal{F}(\overline{1}))| \notag\\ &=\binom{2k-j-1}{j}|\mathcal{G}^{2k-j-1}|\notag\\
&\le \binom{2k-j-1}{j}\binom{x}{n-2k+j}.\label{46}
 \end{align}
On the other hand, the trivial bound gives
\begin{align}
 |\mathcal{SD}(1)^{(2k-2j-1)}|\le 
 \binom{n-1}{2k-2j-1}.\label{47}
 \end{align}

 We  claim that
\begin{align}
\frac{\binom{n-1}{2k-2j-1}}{\binom{x}{2k-2}}
\le e^{\frac{4h}{C}}\left(\frac{3}{C}\right)^{2j-1}. \label{48}
\end{align}
Indeed,
$
\frac{\binom{n-1}{2k-2j-1}}{\binom{x}{2k-2}}
=
\frac{\binom{n-1}{2k-2}}{\binom{x}{2k-2}}
\cdot
\frac{\binom{n-1}{2k-2j-1}}{\binom{n-1}{2k-2}}.
$
The first factor satisfies
$$
\frac{\binom{n-1}{2k-2}}{\binom{x}{2k-2}}
= \prod_{s=0}^{2k-3}\frac{n-1-s}{x-s}
= \prod_{s=0}^{2k-3}\left(1+\frac{h-1}{x-s}\right)
\le \exp\left(\frac{(2k-2)h}{x-2k+3}\right)
\le e^{\frac{4h}{C}}.
$$
The second factor satisfies
$$
\frac{\binom{n-1}{2k-2j-1}}{\binom{n-1}{2k-2}}
= \prod_{s=0}^{2j-2}\frac{2k-2-s}{n-2k+2+s}
\le \left(\frac{2k}{n-2k}\right)^{2j-1}
\le \left(\frac{3}{C}\right)^{2j-1}.
$$
This proves (\ref{48}).

Using $x=n-h$, we have
$
\binom{x}{n-2k+j}=\binom{x}{x-(n-2k+j)}=\binom{x}{2k-h-j}.
$
It follows that
$$
\frac{\binom{x}{n-2k+j}}{\binom{x}{2k-2}}
=
\frac{\binom{x}{2k-h-j}}{\binom{x}{2k-2}}
= \prod_{s=2k-h-j+1}^{2k-2}\frac{s}{x-s+1}
\le \left(\frac{2k}{x-2k+3}\right)^{h+j-2}
\le \left(\frac{3}{C}\right)^{h+j-2}.
$$
Consequently,
\begin{align}
\binom{2k-j-1}{j}
\frac{\binom{x}{n-2k+j}}{\binom{x}{2k-2}}
\le \binom{2k}{j}\left(\frac{3}{C}\right)^{h+j-2}. \label{49}
\end{align}

From (\ref{46})--\ref{49}) and Lemma \ref{le5} (3), we obtain
\begin{align*}
|\mathcal{SD}(1)|
\le\binom{x}{2k-2}
\sum_{j=1}^{k-1}
\min\left\{
e^{\frac{4h}{C}}\left(\frac{3}{C}\right)^{2j-1},
\binom{2k}{j}\left(\frac{3}{C}\right)^{h+j-2}
\right\}<\frac{1}{8}\Delta_0.
\end{align*}
Combining this with (\ref{45}), we get
\begin{align*}|\mathcal{SD}|&=|\mathcal{SD}(1)|+|\mathcal{SD}(\overline{1})|\\
&< \sum_{i=0}^{k-1}\binom{n-1}{2i}
-\frac{7}{8}\Delta_0 + \frac{1}{8}\Delta_0 \\
&= \sum_{i=0}^{k-1}\binom{n-1}{2i}
-\frac{3}{4}\Delta_0<\sum_{i=0}^{k-1}\binom{n-1}{2i}.
\end{align*}
This completes the proof of Lemma \ref{lem4}.

\section*{Declaration of competing interest}
The authors declare that they have no conflicts of interests to this paper.

\section*{Data availability}
No data was used for the research described in the paper.


\end{document}